\documentclass[a4paper,12pt]{amsart}
\usepackage{amssymb, amsmath, mathrsfs}
\usepackage{mathtools}
\usepackage{enumitem}
\mathtoolsset{showonlyrefs}
\usepackage[inline, final]{showlabels}
\usepackage{tikz-cd}
\usepackage{enumitem}
\addtolength{\evensidemargin}{-7mm}
\addtolength{\oddsidemargin}{-7mm}
\addtolength{\textwidth}{14mm}

\usepackage{wrapfig}
\usepackage{tikz}
\usetikzlibrary{arrows}

\pgfdeclarelayer{background}
\pgfsetlayers{background,main}
\definecolor{col1}{RGB}{236,126,112}
\definecolor{col2}{RGB}{177,184,180}
\definecolor{col3}{RGB}{38,73,68}

\usepackage[disable]{todonotes}
\usepackage[
 pdfauthor={Asbjorn C. Nordentoft, Yiannis N. Petridis and Morten S. Risager},
 pdftitle={Bounds on  shifted convolution sums  for Hecke eigenforms},
 pdfkeywords={},
 pdfproducer={LaTeX2e with hyperref},
 pdfcreator={Pdflatex},
 pdfdisplaydoctitle =true]
 {hyperref}
\usepackage{pdfsync}

\newtheorem{thm}{Theorem}[section]
\newtheorem{prop}[thm]{Proposition}
\newtheorem{lem}[thm]{Lemma}

\newtheorem*{rem*}{Remark}

\theoremstyle{definition}

\newtheorem{rem}{Remark}

\def \e {{\varepsilon}}

\def \R {{\mathbb R}}

\def \N {{\mathbb N}}

\def \C {{\mathbb C}}
\def \g {\gamma}
\def \G {\Gamma}
\def \Z {\mathbb{Z}}

\def \slz  {{\hbox{SL}_2( {\mathbb Z})} }

\def \GinfmodG {{\Gamma_{\!\infty}\!\setminus\Gamma}}
\def \supp {{\rm supp\,} }

\newcommand{\abs}[1]{\left\lvert #1 \right\rvert}
\newcommand{\norm}[1]{\left\lVert #1 \right\rVert}

\newcommand{\inprod}[2]{\left \langle #1,#2 \right\rangle}
\DeclareMathOperator*{\res}{Res}

\providecommand{\sym}{\operatorname{sym}}

\title[Shifted convolution sums]{Bounds on  shifted convolution sums  for Hecke eigenforms}
\author[A. Nordentoft]{Asbj\o rn C. Nordentoft}
\address{Mathematical Institute of the University of Bonn, Endenicher Allee 60, Bonn 53115, Germany}
\email{acnordentoft@outlook.com}

\author[Y. Petridis]{Yiannis N. Petridis}
\address{Department of Mathematics, University College London, Gower Street, London WC1E 6BT, United Kingdom}
\email{i.petridis@ucl.ac.uk}

\author[M. Risager]{Morten S. Risager}
\address{Department of Mathematical
  Sciences, University of Copenhagen, Universitetsparken 5, 2100
  Copenhagen \O , Denmark}
\email{risager@math.ku.dk}
\thanks{The research of Asbjørn C. Nordentoft and Morten S. Risager was supported by the Grant DFF-7014-00060B from Independent Research Fund Denmark}

\keywords{Shifted convolution sums}
\subjclass[2020]{Primary 	11F11; Secondary 11F72  }
\date{\today}

\begin{document}
\begin{abstract} Shifted convolution sums play a prominent r\^ole in analytic number theory. Here these sums  are considered in the context of holomorphic Hecke eigenforms.  We investigate pointwise bounds, mean-square bounds consistent with the optimal conjectural bound, and  find asymptotics on average  for their variance.
\end{abstract}

\maketitle
\section{Introduction}
Sums of the form
\begin{equation}A(X,h)=\sum_{n\leq X}\lambda(n)\lambda(n+h)\label{general}\end{equation}
play an important r\^ole in analytic number theory, especially when $\lambda$ is of arithmetic significance, see e.g. \cite{Ingham:1927,  Selberg:1965a, DeshouillersIwaniec:1982, Good:1983a, IvicMotohashi:1995, ConreyGonek:2001,BlomerHarcos:2008a, Holowinsky:2009} and the references therein. The case where $\lambda(n)$ is the $n$th Hecke eigenvalue of an automorphic object is maybe the most interesting, and in this case the  above sum is called a \emph{shifted convolution sum} or sometimes a \emph{generalized additive divisor sum}. Here are some examples:
\begin{enumerate}[label=(\alph*)]
\item \label{divisor-sums} When $\lambda(n)$ equals the divisor function $d(n)=\sum_{b\vert n} 1$ this is the classical binary additive divisor problem. In this case $\lambda(n)$ are the Hecke eigenvalues (of the derivative in $s$ at $s=1/2$) of the weight 0 non-holomorphic Eisenstein series $E(z,s)$ for the full modular group.  See e.g. \cite[p. 62]{Iwaniec:2002a}.
\item \label{hyp-lattice} When $\lambda(n)$ equals $r_2(n)$ i.e. the number of ways of representing $n$ as the sum of two squares and $h=1$ this is a special case of the hyperbolic circle problem, see e.g. \cite[p. 174]{Iwaniec:2002a}. In this case $\lambda(n)$ is equal to the Hecke eigenvalues of the theta series $\theta_2(z)=\sum_{n\in \Z^2} e^{2\pi i \norm{n}^2 z}$.
\end{enumerate}
In both of the above cases the automorphic object is non-cuspidal and the sum $A(X,h)$ admits an asymptotic expansion with a main term $XP(\log X)$ where $P$ is a polynomial, and an error term which is provably $O_h(X^{2/3+\e})$ (See \cite[Thm. 1]{DeshouillersIwaniec:1982}\cite[Cor. 1 ]{Motohashi:1994a}, \cite[Chapter 12]{Iwaniec:2002a}), and conjecturally $O(X^{1/2+\e}h^\e)$.

In this paper we consider the automorphic object to be a weight $k$, level $1$ holomorphic cuspidal Hecke eigenform $f\in S_k(1)$, normalized such that its Fourier expansion \begin{equation}
f(z)=\sum_{n=1}^\infty\lambda_f(n)n^{\frac{k-1}{2}}e(nz),
\end{equation}  satisfies $\lambda_f(1)=1$. Here $e(z)=e^{2\pi i z} $.  We note that, since the Hecke operators are self-adjoint, $\lambda_f(n)$ is real. In this cuspidal case there is no main term but the provable error term is still of the same size as in the non-cuspidal case, i.e.
\begin{equation}\label{jutila-bound}
  A_f(X,h):=\sum_{n\leq X}\lambda_f(n)\lambda_f(n+h)=O_f(X^{2/3+\e})
\end{equation}
uniformly for $1\leq h\leq X^{2/3}$. This was proved by Jutila \cite[Eq (1.32)]{Jutila:1996a}, and we sketch a variant of his proof in Section \ref{sec:spectral-expansion}. Note that Jutila uses a different normalization on $\lambda_f(n)$. Also in this case we expect the conjectural bound
\begin{equation}\label{conjectural-bound}
  A_f(X,h)=O_f(X^{1/2+\e}h^\e)
\end{equation}
for $h\leq X^{1/2-\e}$.
Note that the implied constant depends on $f$. Probably this implied constant is bounded by a constant times $k^\e$ at least for certain ranges of $k$ vs $X$, but the precise conjectural range is not obvious.

One frequently encounters smooth sums e.g.
\begin{equation}\label{smooth-sum-def}
  A_f^{W}(X,h)=\sum_{n\in\N}\lambda_f(n)\lambda_f(n+h)W\left(\frac{
  n+h/2}{X}\right),
\end{equation}
where $W:\R_+\to\R$ is smooth and supported in a compact interval, e.g.  on $[1,2]$.  Note that we are summing over the range where the average of $n$ and $n+h$ is of size roughly $X$.
It is perhaps not too surprising that the analogue of the conjectural bound \eqref{conjectural-bound} holds for the smooth sum $ A_f^{W}(X,h)$, and we will give a short proof of this well-known fact (see \cite[Eq. (5)]{BlomerHarcos:2008a}).
\begin{prop}\label{smooth-sum} Assume the Ramanujan--Petersson conjecture for Maass forms. Then
\begin{equation}  A_f^{W}(X,h)=O_{f, W}(h^\e X^{1/2+\e})
\end{equation}
for $h=O(X^{1/2-\e})$.
\end{prop}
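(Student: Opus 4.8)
The plan is to read the smooth sum off the $\G$-invariant function $g(z)=y^k f(z)\overline{f(z)}$, whose cuspidality forces rapid decay in the cusp and hence $g\in L^2(\GmodH)$. Writing $a_f(n)=\lambda_f(n)n^{(k-1)/2}$, its $h$-th Fourier coefficient in the $x$-variable is
\[
\widehat g_h(y)=\int_0^1 g(x+iy)e(-hx)\,dx=y^k\sum_{n\geq 1}a_f(n)a_f(n+h)e^{-2\pi(2n+h)y}\qquad(h\geq 1).
\]
First I would recover $A_f^W(X,h)$ from this. Choosing a smooth test function $\phi$ concentrated on $y\asymp 1/X$ with the right normalization, the diagonal integral $\int_0^\infty\widehat g_h(y)\phi(y)\,dy=\sum_n a_f(n)a_f(n+h)\int_0^\infty y^k e^{-2\pi(2n+h)y}\phi(y)\,dy$ matches $A_f^W(X,h)$: in the support $n\asymp X$, and the hypothesis $h=O(X^{1/2-\e})$ gives $((n+h)n)^{(k-1)/2}=(n+h/2)^{k-1}(1+O(h^2/X^2))$, so the archimedean factors are absorbed into $W$ up to a negligible error. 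Folding $e(-hx)\phi(y)$ over $\GinfmodG$ into an incomplete Poincaré series $\sum_{\gamma\in\GinfmodG}\phi(\operatorname{Im}\gamma z)\,e(-h\operatorname{Re}\gamma z)$ (with measure factors absorbed into $\phi$) realizes $\int_0^\infty\widehat g_h(y)\phi(y)\,dy$ as an honest automorphic inner product against $g$, which I can then attack spectrally.

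Next I would insert the spectral decomposition of $g$ into this pairing. The constant function contributes only to $\widehat g_0$ and is invisible to the $h\geq 1$ coefficient, which is exactly the absence of a main term in the cuspidal case. What survives is a sum over an orthonormal basis of Maass cusp forms $u_j$ (with spectral parameters $t_j$ and Fourier coefficients $\rho_j$) plus a continuous Eisenstein contribution, of the shape
\[
\int_0^\infty\widehat g_h(y)\phi(y)\,dy=\sum_j\inprod{g}{u_j}\,\rho_j(h)\,I_j+(\text{Eisenstein}),\qquad I_j=\int_0^\infty\sqrt{y}\,K_{it_j}(2\pi hy)\phi(y)\,dy,
\]
where the Bessel function arises from the Fourier--Whittaker expansion of $u_j$.

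The quantitative heart of the argument is to bound this spectral sum uniformly in $X$. Since $\phi$ lives at $y\asymp 1/X$, the Bessel argument is $\asymp h/X\ll X^{-1/2-\e}$, and the small-argument behaviour of $K_{it_j}$ yields $I_j\ll X^{1/2}e^{-\pi\abs{t_j}/2}(1+\abs{t_j})^{A}$; the prefactor $X^{1/2}$ is precisely the source of the exponent $1/2$. The triple products $\inprod{g}{u_j}$ decay faster than any polynomial in $t_j$ because $g$ is smooth: from $\inprod{g}{u_j}=(1/4+t_j^2)^{-N}\inprod{\Delta^N g}{u_j}$ and Cauchy--Schwarz one gets $O_{f,N}((1+\abs{t_j})^{-2N})$. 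Here the Ramanujan--Petersson conjecture for Maass forms enters, and only here: it gives $\rho_j(h)=\rho_j(1)\lambda_j(h)$ with $\abs{\lambda_j(h)}\ll_\e h^\e$, while the standard normalization together with the temperedness $t_j\in\R$ (known unconditionally at level $1$) gives $\abs{\rho_j(1)}e^{-\pi\abs{t_j}/2}\ll(1+\abs{t_j})^{A}$. The exponential factors from $I_j$ and from $\abs{\rho_j(1)}$ cancel, so the rapid decay of $\inprod{g}{u_j}$ makes the series converge to $O_f(h^\e)$, uniformly in $X$. The same estimates, with $\sigma_{2it}(h)h^{-it}\ll_\e h^\e$, the convexity bound for the Rankin--Selberg integral $\inprod{g}{E(\cdot,1/2+it)}$, and $\abs{\zeta(1+2it)}^{-1}\ll\log(2+\abs t)$, dispose of the Eisenstein term. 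Multiplying the spectral bound $O_f(h^\e)$ by the prefactor $X^{1/2}$ and absorbing the approximation errors into $X^\e$ yields $A_f^W(X,h)=O_{f,W}(h^\e X^{1/2+\e})$.

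The main obstacle is this last uniform estimation of the full spectral expansion: one must simultaneously exploit the exponential decay of $I_j$ in $\abs{t_j}$, the super-polynomial decay of the triple products coming from the smoothness of $g$, and the Ramanujan bound on the Hecke eigenvalues $\lambda_j(h)$ --- which is the one ingredient requiring the unproven conjecture and is responsible for the clean $h^\e$ (in its absence the Kim--Sarnak bound would give $h^{7/64+\e}$). Verifying that the continuous spectrum obeys the same bounds, rather than introducing a larger error, is the remaining technical point.
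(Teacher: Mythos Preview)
Your argument is correct and the overall strategy---realizing the smooth shifted sum as an automorphic inner product and expanding spectrally---is the same as the paper's, but the execution differs in two respects. The paper works in Mellin space: it packages the spectral expansion into the Dirichlet series $D_f(s,h)$ via the Poincar\'e series $U_h(z,s)$, proves a bound for $D_f(s,h)$ on the line $\Re(s)=1/2+\e$ by combining Good's mean-square estimate \eqref{Exp-decay} for the triple products $c_j(f)$ with Kuznetsov's bound \eqref{FC-bound} on the Fourier coefficients via Cauchy--Schwarz (and then interpolates by Phragm\'en--Lindel\"of, see \eqref{D-f-bound}), and finally recovers the smooth sum by Mellin inversion and a contour shift. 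You instead pair $g=y^k\abs{f}^2$ directly with an incomplete Poincar\'e series and bound each spectral term individually, using the super-polynomial decay of $\inprod{g}{u_j}$ that comes purely from the smoothness of $g$, together with an individual bound on $\abs{\rho_j(1)}e^{-\pi\abs{t_j}/2}$ and the Ramanujan--Petersson bound on $\lambda_j(h)$. The paper's route has the advantage of producing Theorem~\ref{props-of-D} along the way, which is the essential input for the non-smooth mean-square Theorem~\ref{mean-square-theorem} and for the explicit formula in Proposition~\ref{spectral-expansion}; your direct route is shorter for the smooth sum alone and avoids Good's bound and Phragm\'en--Lindel\"of entirely, at the cost of not saying anything about $D_f(s,h)$ itself.
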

Without assuming the Ramanujan--Petersson conjecture one gets a slightly weaker bound, but one which holds in a larger range of $h$. See Proposition \ref{unconditional-smooth} below for details, as well as comparison with similar results in the existing literature.

For non-smooth sums we prove the same bound in the mean-square.
\begin{thm}\label{mean-square-theorem} For $h\leq X^{1/2}$ we have
\begin{equation}
\left(\frac{1}{X}\int_{X}^{2X}\abs{A_f(x,h)}^2dx\right)^{1/2}=O_f(h^{1/2}X^{1/2+\e}).
\end{equation}
\end{thm}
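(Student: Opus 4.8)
The plan is to feed the spectral expansion of the shifted convolution sum into the second moment and to use that, after integrating in $x$, the contributions of distinct spectral frequencies are essentially orthogonal, so that the mean square is driven by a diagonal that the spectral large sieve controls. This is where the factor $h^{1/2}$ will enter, and I expect it to be an honest reflection of the large sieve rather than of a crude estimate.

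First I would invoke the spectral machinery set up in Section~\ref{sec:spectral-expansion} (the same input that yields Proposition~\ref{smooth-sum}) to write, with $b(n)=\lambda_f(n)\lambda_f(n+h)$,
\begin{equation}
A_f(x,h)=\sum_{j}\frac{\rho_j(h)}{\cosh(\pi t_j)^{1/2}}\,I_j(x)+(\text{Eisenstein contribution})+O_f(x^{\e}),
\end{equation}
where $u_j$ runs over the Hecke--Maass cusp forms with spectral parameters $t_j$ and $h$-th Fourier coefficients $\rho_j(h)$, and $I_j(x)$ is the relevant transform of the sharp cut-off $\mathbf 1_{[0,x]}$. The function $I_j(x)$ has size $\asymp x^{1/2}$, oscillates in $x$ with a frequency controlled by $t_j$, and---crucially---decays in the spectral aspect like a fixed negative power of $t_j$ (the decay coming from the edge of the sharp cut-off, reinforced by the oscillation in $x$); the Eisenstein term is the analogous expression built from divisor-type coefficients.

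Then I would open the square and integrate:
\begin{equation}
\int_X^{2X}\abs{A_f(x,h)}^2\,dx=\sum_{j,j'}\frac{\rho_j(h)\overline{\rho_{j'}(h)}}{\cosh(\pi t_j)^{1/2}\cosh(\pi t_{j'})^{1/2}}\int_X^{2X}I_j(x)\overline{I_{j'}(x)}\,dx+\cdots .
\end{equation}
On the diagonal $j=j'$ one has a bound of the shape $\int_X^{2X}\abs{I_j(x)}^2\,dx\ll X^2 t_j^{-c}$ for a fixed $c>0$. Summing dyadically over $t_j\asymp T$ and inserting the spectral large sieve in the form $\sum_{t_j\le T}\cosh(\pi t_j)^{-1}\abs{\rho_j(h)}^2\ll (T^2+h)h^{\e}$ then bounds the diagonal by $\ll h\,X^{2+\e}$, which already gives the asserted $\tfrac1X\int_X^{2X}\abs{A_f}^2\ll h X^{1+\e}$. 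Morally the $T^2$ term reproduces the $h$-free part $X^{1/2+\e}$ (the one already visible from Deligne and Rankin--Selberg applied to the $m=n$ terms), while the $+h$ term is exactly the source of the $h^{1/2}$ in the theorem. The Eisenstein diagonal is handled identically and stays within the same bound because $h\le X^{1/2}$.

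The main obstacle is the off-diagonal $j\ne j'$ (together with the cusp--Eisenstein and Eisenstein--Eisenstein cross terms). Here I would exploit that $I_j(x)\overline{I_{j'}(x)}$ oscillates in $x$ with frequency bounded away from zero when $t_j\ne t_{j'}$, so that repeated integration by parts (or stationary phase) in $x$ over $[X,2X]$ produces strong cancellation; the delicate point is to make this cancellation uniform in the spectral parameters and then to sum the resulting bounds over the \emph{full}, untruncated spectrum, since the sharp cut-off does not cut off the $j$-sum rapidly the way a smooth weight does in Proposition~\ref{smooth-sum}. Controlling this off-diagonal---rather than the essentially routine diagonal---is the real content of the theorem, and it is precisely the step where the sharp cut-off makes the problem genuinely harder than the smooth case treated in Proposition~\ref{smooth-sum}.
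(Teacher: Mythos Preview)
Your approach is not the one the paper takes, and as written it has a genuine gap. The paper never opens the square and integrates over $[X,2X]$. Instead it reduces the mean-square bound, via Lemma~\ref{technical-lemma} (an Ivi\'c--Motohashi type large-values equivalence), to counting well-spaced points $X_1<\dots<X_R\in[X,2X]$ at which $|B_f(X_r,h)|\ge V$. At those points it inserts the truncated explicit formula of Proposition~\ref{spectral-expansion}, and the decisive input is Jutila's \emph{bilinear} spectral large sieve (Theorem~\ref{jutila-sieve}), which bounds
\[
\sum_{t_j\le T}\frac{1}{\cosh(\pi t_j)}\Bigl|\sum_{r\le R} b_r\, a_j(h)\, X_r^{\pm it_j}\Bigr|^2
\]
directly in terms of the spacing $V$ of the $X_r$. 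The oscillation in $x$ that you want to exploit is already packaged inside this sieve; there is no separate off-diagonal computation. Balancing $T=X^{1+\e/2}/V$ against the truncation error $X/T$ from Proposition~\ref{spectral-expansion} yields $R\ll hX^{2+\e}V^{-3}$, and Lemma~\ref{technical-lemma} converts this back into the mean-square estimate.

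The gap in your sketch is precisely the off-diagonal. Integration by parts in $x$ only produces a saving of order $(1+|t_j-t_{j'}|)^{-1}$ in $\int_X^{2X}I_j(x)\overline{I_{j'}(x)}\,dx$, and by Weyl's law there are $\asymp U$ parameters $t_{j'}$ within distance $1$ of any given $t_j\sim U$; those near-diagonal terms receive no cancellation, and summing them is as bad as the diagonal itself but without the positivity that would let you invoke a large sieve. Since the sharp cut-off forces a spectral truncation at $T\gg X^{1/2-\e}$ (to make the $O(X/T)$ error in Proposition~\ref{spectral-expansion} admissible), you are handling $\asymp X$ spectral terms and the off-diagonal accumulates uncontrollably with only a $(1+|t_j-t_{j'}|)^{-1}$ saving. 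There is also a subtler problem already on your diagonal: the coefficient in the explicit formula is $a_j(h)c_j(f)$, not just the Fourier coefficient of $u_j$, and bounding $\sum_j|a_j(h)|^2|c_j(f)|^2|\gamma_{t_j}|^2$ requires combining the two \emph{average} bounds \eqref{Exp-decay} and \eqref{FC-bound} on different quadratic forms; this cannot be done with the ordinary Kuznetsov large sieve alone and is exactly what the bilinear sieve plus the large-values reduction are designed to circumvent.
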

Note that for fixed $f, h$ this is consistent with the conjectural bound of $A_f(x,h)$ and for varying $h$ it is better than the trivial bound coming from \eqref{2/3-bound} as long as $h\leq X^{1/2}$.
The result is analogous to a result proved by Faĭziev \cite[Theorem 3]{Faiziev:1985} and independently by Ivi\'c and Motohashi \cite[Cor. 5]{IvicMotohashi:1994a}, who proved the analogous result for the classical additive divisor problem. For the case \ref{hyp-lattice}, i.e. $\lambda (n)=r_2(n)$ the analogous result is due to Chamizo \cite{Chamizo:1996b}.

The main ingredient in proving Proposition \ref{smooth-sum} and Theorem \ref{mean-square-theorem} is the spectral expansion of the relevant generating series.

When we average $A_f^{W_1}(h_1,X)\overline{A_f^{W_2}(h_2,X)}$ over a Hecke basis $H_k$ of $S_k$ (with suitable weights) we find asymptotics as $X,k\to\infty$ at a certain range. These are consistent with $A_f^{W}(X,h)$ being of the order of $X^{1/2}$  in the $X$-aspect when $X\ll k^{1/2-\e}$. More precisely we have the following:

Let $\tau_1(n)=\sum_{d\mid n} d$ be the sum of divisors function, and let $L(s, \sym^2 f)$ be the symmetric square $L$-function associated to $f$, i.e.
\begin{equation}
   L(s, \sym^2 f)=\zeta(2s)\sum_{n=1}^\infty\frac{\lambda_f(n^2)}{n^s}, \quad\textrm{ for }\Re(s)>1,
\end{equation}and defined on $\C$ by analytic continuation.
For $W_1, W_2 :(0,\infty)\rightarrow \R$ and $h_1,h_1\in \N$ we let
\begin{align} B_{h_1,h_2}(W_1, W_2)=  \tau_1((h_1,h_2))\int_{0 }^\infty W_1(h_1 y) \overline{W_2(h_2y)}dy. \end{align}
Then we show the following result:
\begin{thm} \label{scs-introduction}
Let $W_1, W_2 :(0,\infty)\rightarrow \R$ be smooth with compact support. Then for $X\ll k^{1/2-\e}$ we have
\begin{align}
  \frac{2\pi ^2}{k-1}\sum_{f\in H_k} \frac{A_f^{W_1}(h_1,X)\overline{A_f^{W_2}(h_2,X)}}{L(1, \sym^2 f)}
 =  B_{h_1,h_2}(W_1,W_2) X+ O_{h_i, W_i }(1).
\end{align}

\end{thm}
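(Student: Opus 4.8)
The plan is to open the product of the two shifted convolution sums, use Hecke multiplicativity to rewrite the summand as a sum of products $\lambda_f(A)\lambda_f(B)$, and then apply the Petersson trace formula term by term. The reason for the particular weight is that, via the Rankin--Selberg relation between $\norm{f}^2$ and $L(1,\sym^2 f)$, the quantity $\frac{2\pi^2}{(k-1)L(1,\sym^2 f)}$ equals (up to the normalization of the trace formula) the harmonic weight $\omega_f=\Gamma(k-1)/((4\pi)^{k-1}\norm{f}^2)$, so that
\[
\frac{2\pi^2}{k-1}\sum_{f\in H_k}\frac{\lambda_f(A)\lambda_f(B)}{L(1,\sym^2 f)} = \delta_{A,B} + 2\pi i^{-k}\sum_{c\ge1}\frac{S(A,B;c)}{c}J_{k-1}\left(\frac{4\pi\sqrt{AB}}{c}\right).
\]
Using $\lambda_f(n)\in\R$ together with the Hecke relation $\lambda_f(a)\lambda_f(b)=\sum_{d\mid(a,b)}\lambda_f(ab/d^2)$, applied first to $(m,m+h_1)$ and then to $(n,n+h_2)$ and noting $(m,m+h_1)=(m,h_1)$, I would rewrite the weighted average as
\[
\sum_{m,n\geq 1}W_1\left(\tfrac{m+h_1/2}{X}\right)\overline{W_2\left(\tfrac{n+h_2/2}{X}\right)}\sum_{\substack{d_1\mid(m,h_1)\\ d_2\mid(n,h_2)}}\left[\delta_{A,B}+2\pi i^{-k}\sum_{c\ge1}\frac{S(A,B;c)}{c}J_{k-1}\left(\tfrac{4\pi\sqrt{AB}}{c}\right)\right],
\]
where $A=m(m+h_1)/d_1^2$ and $B=n(n+h_2)/d_2^2$.

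For the off-diagonal (Kloosterman) contribution I would use that $W_1,W_2$ are supported on $m,n\asymp X$, so $A,B\ll_{h_i} X^2$ and hence $\sqrt{AB}\ll X^2\ll k^{1-2\e}$. Since $J_{k-1}(x)\ll (x/2)^{k-1}/\Gamma(k)$, the Bessel function is then smaller than any fixed power of $k$; after trivially estimating the Kloosterman sums and summing over the $O(X^2)$ pairs $(m,n)$, over $c$, and over the finitely many $d_1,d_2$, this entire contribution is $o(1)$. In contrast with Proposition \ref{smooth-sum}, this step is unconditional: the range $X\ll k^{1/2-\e}$ lets the rapid decay of $J_{k-1}$ play the role otherwise played by the Ramanujan--Petersson conjecture.

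The main term comes from the diagonal $\delta_{A,B}$. Writing $m=d_1\mu$ and $n=d_2\nu$ (legitimate since $d_1\mid m$, $d_2\mid n$) and setting $g_i=h_i/d_i\in\N$, the condition $A=B$ becomes $\mu(\mu+g_1)=\nu(\nu+g_2)$. With $u=2\mu+g_1$, $v=2\nu+g_2$ this reads $u^2-v^2=g_1^2-g_2^2$. If $g_1\neq g_2$ then $(u-v)(u+v)=g_1^2-g_2^2\neq0$ forces $u+v\ll_{h_i}1$, so there are only $O_{h_i}(1)$ solutions, all with $\mu,\nu\ll_{h_i}1$; these contribute $O_{h_i,W_i}(1)$. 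If $g_1=g_2=:g$ then $\mu=\nu$, and such pairs $(d_1,d_2)$ correspond bijectively to divisors $g\mid(h_1,h_2)$ via $d_1=h_1/g$, $d_2=h_2/g$.

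For each such $g$ the diagonal reduces to $\sum_{\mu\geq1}W_1\!\left(\tfrac{(h_1/g)\mu+h_1/2}{X}\right)\overline{W_2\!\left(\tfrac{(h_2/g)\mu+h_2/2}{X}\right)}$. Comparing this sum with the corresponding integral (the summand is smooth with total variation $O_{h_i,W_i}(1)$ uniformly in $X$, so Euler--Maclaurin gives an error $O(1)$), rescaling $t=Xu$ and discarding the shifts $h_i/2X$ at a cost of $O_{h_i,W_i}(1)$, and finally substituting $y=u/g$, I obtain $g\,X\int_0^\infty W_1(h_1y)\overline{W_2(h_2y)}\,dy+O(1)$. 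Summing over $g\mid(h_1,h_2)$ produces the factor $\tau_1((h_1,h_2))$ and hence the claimed main term $B_{h_1,h_2}(W_1,W_2)\,X$. The hard part will be the diagonal bookkeeping: one must show that the only linear-in-$X$ contribution comes from $g_1=g_2$ — the factorization $u^2-v^2=g_1^2-g_2^2$ is exactly what confines the remaining solutions to a bounded set — and then control the Euler--Maclaurin and shift errors carefully enough to keep them $O(1)$. By comparison, the Kloosterman term is entirely harmless in the range $X\ll k^{1/2-\e}$.
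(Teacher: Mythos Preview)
Your proposal is correct and follows essentially the same route as the paper: Hecke relations to reduce to $\lambda_f(A)\lambda_f(B)$, Petersson's formula, the Bessel bound $J_{k-1}(x)\ll (ex/2k)^{k-1}$ to kill the off-diagonal when $X\ll k^{1/2-\e}$, and the factorization $(2\mu+g_1)^2-(2\nu+g_2)^2=g_1^2-g_2^2$ to confine the non-matching diagonal solutions. The only cosmetic difference is that the paper passes from the $\mu$-sum to the integral via Poisson summation rather than Euler--Maclaurin, which yields the same $O(1)$ error; your parametrization by $g=h/d$ is exactly the paper's variable after its change of summation index.
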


We recall that the arithmetical weights  $L(1, \sym^2 f)^{-1}$ are relatively well-behaved, in the sense that  $k^{-\e}\ll L(1, \sym^2 f)\ll k^\e$, see \cite{GoldfeldHoffsteinLieman:1994a}. With some work these weights can probably be removed from the Theorem.
The interest in  Theorem \ref{scs-introduction} comes from the study of small scale equidistribution at infinity for Hecke eigenforms, see \cite{NordentoftPetridisRisager:2020a},  where sums with extra average over $k$ are considered.  These results are proved using the  Petersson formula. In the case of Theorem \ref{scs-introduction} the off-diagonal terms arising from the formula are essentially trivial, due to the decay of the $J$-Bessel function.
\section*{Acknowledgements}
We are grateful to the anonymous referee for his/her many insightful comments and suggestions.

\section{Spectral expansions of shifted convolution sums}\label{sec:spectral-expansion} In order to understand the average behavior of $\lambda_f(n)\lambda_f(n+h)$ we consider, following Selberg and others e.g. \cite{Selberg:1965a, Good:1982a, Jutila:1996a, Sarnak:2001a, LauLiuYe:2006a},  the Dirichlet series
\begin{equation}
  D_f(s,h)=\sum_{\substack{n,m=1\\m-n=h}}^\infty\frac{\lambda_f(m)\lambda_f(n)(nm)^{(k-1)/2}}{(n+m+h)^{s+k-1}}.
\end{equation}
This series converges absolutely for $\Re(s)>1$ by standard bounds on the Fourier coefficients, see,  e.g. Deligne's bound \cite{Deligne:1974a}
  \begin{equation}\label{deligne-bound}\abs{\lambda_f(n)}\leq d(n)\end{equation} or the Rankin--Selberg estimate \cite{Rankin:1939b, Selberg:1940}
  \begin{equation}\label{rankin-selberg}
  \sum_{n\leq X}\abs{\lambda_{f}(n)}^2= C_fX+O(X^{3/5}).
\end{equation}  We note that since the Fourier coefficients are real,
\begin{equation}
  \overline{D_{f}(\overline s,h)}=D_f(s,h).
\end{equation}
 In order to understand the analytic properties of $D_f(s,h)$ we consider, following Selberg \cite[(3.10)]{Selberg:1965a}, the Poincar\'e series
\begin{equation}
U_h(z,s)=\sum_{\g\in\GinfmodG}  e(h\g z)\Im(\g z)^s,
\end{equation}
where $h\in \N.$ This is convergent for $\Re(s)>1$ and admits meromorphic continuation on $s\in \C$ as a $\G$-automorphic function. In $\Re(s)\geq 1/2$ its poles are at $s_j=1/2+it_j$, where $1/4+t_j^2$ is an eigenvalue of the automorphic Laplacian, and in $\Re(s)>1/2$ it is square integrable away from its poles. Notice that for $\G=\G(1)$ there are no poles for $\Re(s)>1/2$. By unfolding we find that
\begin{equation}\label{integral-expression-X}
  \inprod{y^k\abs{f}^2}{U_h(\cdot,\overline s)}=(2\pi)^{-(s+k-1)}\Gamma(s+k-1)D_f(s,h) \quad\textrm{ for }\Re(s)>1.
\end{equation}
Since the left-hand side is meromorphic for $s\in \C$, this gives the meromorphic continuation of $D_f(s,h)$ to $s\in \C$.
The function $U_h(z,s)$ is orthogonal to constants by unfolding, and has the spectral expansion
\begin{align}
  U_h(z,s)=\sum_{j=1}^\infty &\inprod{U_h(\cdot,s)}{u_j}u_j(z)\\ \label{expansion}&+\frac{1}{4\pi}\int_\R\inprod{U_h(\cdot, s)}{E (\cdot, 1/2+it)}E(z,1/2+it)dt,
\end{align}
where $\{u_j\}_{j=1}^\infty$ is an orthonormal basis of Maass cusp forms and $E(z,s)$ is the non-holomorphic Eisenstein series at the cusp $i\infty$, see \cite[Thms 4.7 and 7.3]{Iwaniec:2002a}. Again by unfolding we have
\begin{align}
\inprod{U_h(\cdot,s)}{u_j} &= \frac{\overline {a_j(h)}}{(4\pi h)^{(s-1/2)}}\pi^{1/2}\frac{\Gamma(s-1/2+it_j)\Gamma(s-1/2-it_j)}{\Gamma(s)},\\
\inprod{U_h(\cdot,s)}{E(z,1/2+it)} &= \frac{\overline {\varphi (1/2+it,h)}}{(4\pi h)^{(s-1/2)}}\pi^{1/2}\frac{\Gamma(s-1/2+it)\Gamma(s-1/2-it)}{\Gamma(s)},
\end{align}
where
\begin{equation}
  u_j(z)=\sum_{n \neq 0 }a_j(n)\sqrt{y}K_{it_j}(2\pi \abs{n} y)e(nx)
\end{equation}
is the Fourier expansion of $u_j$ at infinity and, similarly, \begin{displaymath}\varphi(s,n)=\frac{2 }{\xi(2s)}\abs{n}^{s-1/2}\sum_{d\vert n}d^{1-2s}\end{displaymath} is the $n$th  Fourier coefficient of $E(z, s)$.

The expansion \eqref{expansion} combined with \eqref{integral-expression-X} gives the following expression, which is useful for understanding the analytic properties of $D_f(s,h)$, cf. \cite[Theorem 1]{Hafner:1983} and \cite[(1.20)]{Jutila:1996a}:
\begin{equation}\label{another-expansion}
h^{s-1/2}D_f(s,h)=\sum_{j=1}^\infty a_j(h) c_j(f){G_k(s,t_j)}+\frac{1}{4\pi}\int_\R \varphi (1/2+it,h)c_{t}(f){G_k(s,t)}dt,
\end{equation}
where
\begin{align}c_{j}(f)&=\inprod{y^k\abs{f}^2}{u_j},\\c_{t}(f)&=\inprod{y^k\abs{f}^2}{ E(\cdot,1/2+it)},\end{align} and
  \begin{equation}
  G_k(s,t)=\frac{(2\pi)^k}{2^s}\frac{\Gamma(s-1/2-it)\Gamma(s-1/2+it)}{\Gamma(s+k-1)\Gamma(s)}.
  \end{equation}
In order to effectively analyze $D_f(s,h)$ using the spectral expansion \eqref{another-expansion} we need the following bound due to Good \cite[Theorem 1]{Good:1981a}:
\begin{equation}\label{Exp-decay}
  \sum_{0< t_j\leq  T}\abs{c_j(f)}^2e^{\pi t_j}+\frac{1}{4\pi}\int_{-T}^T\abs{c_t(f)}^2e^{\pi \abs{t}}dt\ll_f T^{2k}.
\end{equation}
See also \cite[Theorem 1]{Jutila:1996a} for the case of a Maass form $u_{j_0}$ instead of  $y^{k/2}f$. Comparing with Weyl's law \cite[Corollary 11.1]{Iwaniec:2002a}, we observe that  \eqref{Exp-decay} implies that on average $c_j(f)$ is bounded by a constant times $e^{-\pi t_j/2}t_j^{k-1}$.
Note that, by using a triple product identity,  this may be interpreted as the Lindel\"of hypothesis on average for $L(f\times f\times u_j,1/2)$ in the spectral aspect, see \cite{Watson:2002a, Ichino:2008a}.

Concerning the Fourier coefficients we have for $\Gamma=\slz$ the following bound, which follows from Kuznetsov's asymptotic formula \cite[Theorem 6]{Kuznetsov:1980a}, see also \cite[Lemma 2.4]{Motohashi:1997a} and the remarks following its proof:
\begin{equation}\label{FC-bound}
\sum_{T<t_j\leq T+\Delta}{\frac{\abs{a_j(h)}^2}{\cosh(\pi t_j)}}+\frac{1}{4\pi}\int_{T\leq \abs{t}\leq T+\Delta}\frac{\abs{\varphi(1/2+it,h)}^2}{\cosh(\pi t)}dt\ll (T\Delta+h^{1/2})(Th)^\e
\end{equation}
uniform in $h, T$ and $1\leq \Delta\leq T$.  Note that Kuznetsov's result  is stated without the integral but a direct investigation of the integral  gives $\ll T(Th)^\e$.  We note  that again here we are using the arithmeticity of $\G$. For a more general group the best known bound is $T^2+hT$  in a long range $\Delta=T$, see \cite[Eq (9.13)]{Iwaniec:2002a}.
Using these \eqref{Exp-decay} and \eqref{FC-bound} we can prove the following result:
\begin{thm}\label{props-of-D}The function $D_f(s,h)$ admits meromorphic continuation to $s\in \C$. On $\Re(s)\geq 1/2$ the poles are  exactly at $s=1/2\pm it_j$,  with corresponding residues
  \begin{equation}\res_{s=1/2\pm it_j}D_f(s,h)=\frac{a_j(h)c_j(f)}{h^{\pm i t_j}}\res_{s=1/2\pm it_j}G_k(s,t_j).
    \end{equation}
For $1/2+\e\leq \Re(s)\leq 1+\e$ we have the bound
\begin{equation}
  D_f(s,h)\ll_{f,\e} h^{1/2-\Re(s)}\abs{s}^{1/2}(\abs{s}^{1/2}+h^{1/4})\abs{hs}^\e .
\end{equation}
\end{thm}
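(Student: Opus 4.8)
The plan is to read all three assertions off the spectral identities \eqref{integral-expression-X} and \eqref{another-expansion}, leaving the genuine work for the final bound.

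First I would dispose of the continuation and the poles using \eqref{integral-expression-X}. The right-hand side $\inprod{y^k\abs{f}^2}{U_h(\cdot,\overline s)}$ is meromorphic on $\C$ because $U_h(z,s)$ is, and the scalar $(2\pi)^{-(s+k-1)}\Gamma(s+k-1)$ is meromorphic with neither a zero nor a pole for $\Re(s)\geq 1/2$; hence $D_f(s,h)$ continues meromorphically, and in $\Re(s)\geq 1/2$ its poles can only sit at the poles $s=1/2+it_j$ of $U_h$. The reflection $\overline{D_f(\overline s,h)}=D_f(s,h)$ then also forces poles at $s=1/2-it_j$, giving exactly the claimed set. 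For the residues I would argue locally in \eqref{another-expansion}: near $s=1/2+it_j$ only the single discrete term of index $j$ is singular, the continuous integral remaining holomorphic there (as $s\to 1/2+it_j$ the two $t$-poles of $G_k(s,t)$ approach $\pm t_j$ and cross the contour separately rather than pinching it), so
\[
\res_{s=1/2+it_j}\bigl(h^{s-1/2}D_f(s,h)\bigr)=a_j(h)c_j(f)\res_{s=1/2+it_j}G_k(s,t_j).
\]
Dividing by the entire, non-vanishing factor $h^{s-1/2}$, whose value at the pole is $h^{it_j}$, yields the stated residue, and reflection covers $s=1/2-it_j$.

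The bound is the real content. Writing $s=\sigma+i\tau$ and moving $h^{s-1/2}$ across \eqref{another-expansion}, it suffices to bound the discrete sum $\sum_j a_j(h)c_j(f)G_k(s,t_j)$ and its continuous analogue by $\abs{s}^{1/2}(\abs{s}^{1/2}+h^{1/4})\abs{hs}^\e$. The starting point is a Stirling analysis of $G_k(s,t)$: for $\abs{t}\leq\abs{\tau}$ the exponential factors of the four $\Gamma$'s cancel and $\abs{G_k(s,t)}$ is of purely polynomial size, rising from $\asymp\abs{\tau}^{-k}$ for small $\abs{t}$ to a thin peak $\asymp\abs{\tau}^{1-\sigma-k}$ as $\abs{t}\to\abs{\tau}$, while for $\abs{t}>\abs{\tau}$ one gains a factor $e^{-\pi(\abs{t}-\abs{\tau})}$. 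The decisive observation is that the weight $e^{\pi t_j}$ attached to $\abs{c_j(f)}^2$ in \eqref{Exp-decay} and the weight $\cosh(\pi t_j)^{-1}\asymp e^{-\pi t_j}$ attached to $\abs{a_j(h)}^2$ in \eqref{FC-bound} cancel one another, so the summand $\abs{a_j(h)c_j(f)G_k(s,t_j)}$ carries no exponential at all; the sum is therefore effectively supported on $t_j\ll\abs{s}$, the tail $t_j\gg\abs{\tau}$ being killed by the gain above.

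On each block $T<t_j\leq T+\Delta$ I would apply Cauchy--Schwarz in the form
\[
\sum\abs{a_j(h)c_j(f)G_k(s,t_j)}\leq\Bigl(\sum\tfrac{\abs{a_j(h)}^2}{\cosh\pi t_j}\Bigr)^{1/2}\Bigl(\sum\abs{c_j(f)}^2\cosh(\pi t_j)\abs{G_k(s,t_j)}^2\Bigr)^{1/2},
\]
bounding the first factor by \eqref{FC-bound} as $\ll(T\Delta+h^{1/2})^{1/2}\abs{Th}^{\e}$ and the second, after extracting $\sup_{\text{block}}\abs{G_k}$, by \eqref{Exp-decay} as $\ll\sup_{\text{block}}\abs{G_k}\cdot T^k$. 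For the bulk $t_j\leq\abs{\tau}/2$ I take dyadic $\Delta=T$; there $\abs{G_k}\asymp\abs{\tau}^{-k}$ and summing the blocks gives $\ll(\abs{s}+h^{1/4})\abs{hs}^\e$. The transition region is the crux: there the peak of $G_k$ is too narrow to absorb a whole dyadic block, so I would instead decompose $\{t_j:\bigl|\,\abs{\tau}-\abs{t}\,\bigr|\in(D,2D]\}$ dyadically in the distance $D$ (taking $\Delta=D$, $T\asymp\abs{\tau}$), where $\abs{G_k}\asymp D^{\sigma-1}\abs{\tau}^{1-\sigma-k}$. Cauchy--Schwarz then contributes $\ll(\abs{\tau}^{1/2}D^{1/2}+h^{1/4})D^{\sigma-1}\abs{\tau}^{1-\sigma}\abs{hs}^\e$; summing over $D\leq\abs{\tau}$ the first piece is dominated by $D\asymp\abs{\tau}$ and produces $\abs{s}$, while the second is dominated by $D\asymp 1$ and produces $h^{1/4}\abs{\tau}^{1-\sigma}\leq h^{1/4}\abs{s}^{1/2}$. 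Combining bulk, transition and the negligible tail, and repeating the identical argument for the continuous integral against the continuous parts of \eqref{Exp-decay} and \eqref{FC-bound}, gives the claim after restoring $h^{1/2-\sigma}$. The hard part is exactly this transition region: the $\abs{s}^{1/2}h^{1/4}$ term is precisely what the refined distance-decomposition extracts, and obtaining it rather than a larger power hinges on the sharp Stirling profile of $G_k$ near $\abs{t}=\abs{\tau}$ together with the careful balancing of \eqref{Exp-decay} against \eqref{FC-bound}.
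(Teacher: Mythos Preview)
Your proposal is correct and follows essentially the same route as the paper: the bound comes from the same Cauchy--Schwarz pairing of \eqref{Exp-decay} against \eqref{FC-bound}, the same Stirling profile for $G_k(s,t)$, and the same dyadic decomposition in the distance $\bigl|\,|t|-|\tau|\,\bigr|$ through the transition region, with the tail disposed of by the exponential gain. The only cosmetic difference is that for the continuation to all of $\C$ you invoke \eqref{integral-expression-X} directly (which the paper itself notes just before the theorem), whereas the paper's formal proof re-derives it from \eqref{another-expansion} via a contour shift in the Eisenstein integral; either route is valid for the statement as given.
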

\begin{proof} The holomorphic continuation to $\Re(s)>1/2$ follows from the spectral expansion \eqref{another-expansion}, as the sum and integral converge absolutely and locally uniformly when $\Re(s)\geq 1/2+\e$. This follows from the following bound, which also proves the claim about the bound on vertical lines:
  Using Cauchy--Schwarz in combination with \eqref{Exp-decay} and \eqref{FC-bound}  we find that
   for $U$ bounded away from zero we have
\begin{align}\label{lets-get-it-right}   \sum_{U<t_j\leq U+\Delta}&\abs{a_j(h)c_j(f)}+\int_{U<\abs{t}\leq U+\Delta}\abs{\varphi(1/2+it,h)c_{t}(f)}\\
  & \ll_{f,\e} \sqrt{U\Delta+h^{1/2}}U^k(Uh)^\e\ll_{f,\e} ({U^{1/2}\Delta^{1/2}+h^{1/4}})U^k(Uh)^\e.
\end{align}
Note that for $s$ in a vertical strip, say $s=\sigma+i\tau$ with $-\infty <a\leq \sigma\leq b<\infty$,  when all arguments of the Gamma functions are bounded away from $0, -1, -2, \ldots $, the Stirling asymptotics gives
\begin{equation}
  G(s,t)\ll_k \frac{e^{-\frac{\pi}{2}(\abs{t+\tau}+\abs{t-\tau})}}{e^{-\pi\abs{\tau}}}\frac{(1+ \abs{\tau-t})^{\sigma-1}(1+\abs{\tau+t})^{\sigma-1}}{(1+\abs{\tau})^{2\sigma+k-2}} .
\end{equation}
 It is convenient to note that the bound is invariant under changing signs on $\tau$ as well as on $t$. The exponential factor is trivial for $\abs{t}<\abs{\tau}$ and equals $e^{-\pi(\abs{t}-\abs{\tau})}$ otherwise.
 We split the sum and integral in  \eqref{another-expansion} according to the various ranges of the spectral parameter $t$. We assume $\abs{\tau}>1$.

\begin{enumerate}[label=(\roman*)]
    \item If the spectral parameter satisfies $\abs{\abs{t}-\abs{\tau}}\leq 1$, then
    $$G_k(s)\ll (1+\abs{s})^{-(k+\sigma-1)},$$
    so combining this with \eqref{lets-get-it-right}  we find that both the corresponding discrete ($\abs{t_j-\abs{\tau}}\leq 1$) and continuous ($\abs{\abs{t}-\abs{\tau}}\leq 1$) contributions  are bounded by \begin{equation*}\abs{s}^{1-\sigma}(\abs{s}^{1/2}+h^{1/4})(h\abs{s})^\e.\end{equation*}

 \item \label{two}If the spectral parameter satisfies $\abs{\abs{t}-\abs{\tau}}> 1$ with $0\leq \abs{t}\leq 2\abs{\tau}$, then
 $$G_k(s,t)\ll (1+\abs{\abs{\tau}-\abs{t}})^{\sigma-1}(1+\abs{s})^{-(k+\sigma-1)}.$$ We further subdivide the sum into $1\leq A<\abs{\abs{t}-\abs{\tau}}\leq 2A$ with $A \leq \abs{\tau}$. Note that this is a sum at height $\ll \abs{\tau}$ of length $\ll A$ and we find that the sum over this range is bounded by a constant times $(1+A)^{\sigma-1}(1+\abs{s})^{-(\sigma-1)}(\tau^{1/2}A^{1/2}+h^{1/4})\abs{sh}^\e$. We therefore bound the  contributions as
 \begin{align}
     \sum_{\substack{0\leq t_j\leq 2\abs{\tau}\\\abs{t_j-\abs{\tau}}> 1}}&\ll \sum_{1\leq n \leq \log_2(\abs{\tau})}\sum_{2^n\leq \abs{t_j-\abs{\tau}}\leq 2^{n+1}}\\
 &\ll    \sum_{1\leq n \leq \log_2(\abs{\tau})}\frac{(2^n)^{\sigma-1}}{(1+\abs{s})^{(k+\sigma-1)}}(\tau^{1/2}2^{n/2}+h^{1/4})\abs{sh}^\e\\
& \ll\frac{1}{(1+\abs{s})^{(\sigma-1)}}(\abs{\tau}^{1/2}\max(1, \abs{\tau}^{\sigma-1/2})+h^{1/4})\abs{sh}^\e
 \end{align}
 and similarly for the continuous contribution. When $\sigma >1/2$, this agrees with the bound stated in the theorem.

 \item  If $n\abs{\tau}< \abs{t}\leq (n+1)\abs{\tau}$ with $n\geq 2$ (i.e. a window of size $\abs{\tau}$ at height  $n\abs{\tau}$,  we see that $$G(s,t)\ll e^{-\pi (n-1)\abs{\tau}}\frac{n^{2\sigma-2}}{(1+\abs{s})^{k}},$$ and we bound the contribution as
\begin{equation*}
    \sum_{n\abs{\tau}< t_j\leq (n+1)\abs{\tau}} \ll e^{-\pi (n-1)\abs{\tau}} n^{2\sigma-2}((n\abs{\tau}^2)^{1/2}+h^{1/4})\abs{hs}^{\e}.
\end{equation*}
We can sum these contributions over $n$ from 2 to infinity and get a bound of $\ll h^{1/4}\abs{s}^{-A}$ for any $A$.
\end{enumerate}

For $\abs{\tau} < 1$ the function $G(s,t)$ decays exponentially in $t$ locally uniformly in $s$ and the sum and integral are again holomorphic.

To see the meromorphic continuation further to the left we argue as follows:
The sum in \eqref{another-expansion} is meromorphic in $\C$ with poles at $s=1/2\pm i t_j-l$, $l=0,1,\ldots$ with corresponding residues
\begin{equation}
  \frac{a_j(h)c_j(f)}{h^{\pm i t_j-l}}\res_{s=1/2\pm it_j-l}G_k(s,t_j).
\end{equation}

We note that for $1/2-l+\e\leq \Re(s)\leq 1/2-l+1-\e$ the above bound on vertical lines is still valid for the sum with the same proof as long as we are bounded away from the poles.

The integral in \eqref{another-expansion} can be written as a line integral along the vertical line $\Re(w)=1/2$:
\begin{equation}\label{integral-expression}
\int_{(1/2)} \varphi(w,h)c_{\frac{w-1/2}{i}}(f)G_k\left(s,\frac{w-1/2}{i}\right)\frac{1}{i}dw.
\end{equation}

This can be continued analytically to $s\in \C$ as follows: By holomorphicity we may deform the integral to the left of two small discs centered at $1/2\pm i\tau$. This continues the integral to a holomorphic function in the union of $\Re(s)>1/2$ with the two small discs. The integral  \eqref{integral-expression} is also holomorphic for $-1/2<\Re(s)<1/2 $ by absolute convergence using the above estimates. For this expression we can now deform the line of integration to the right of two small half-discs at the same heights $\pm \tau$. These two holomorphic functions are now both defined in the two discs. To see how these two functions relate, we take the  difference of the functions and find,  by Cauchy's residue theorem, that their difference  equals
\begin{equation}
\Phi(s)=  \frac{(2\pi)^{(k+1)}\Gamma(2s-1)}{2^s\Gamma(k+s-1)\Gamma(s)}\left(\varphi (w,h)\inprod{y^k\abs{f}^2}{E(z,w)}\Big\lvert_{w=1-s}^{  w=s}\right).
\end{equation}
It follows that the integral
\begin{equation}
 \int_\R \varphi(1/2+it,h)c_{t}(f)G_k(s,t)dt
\end{equation}
can be continued to a meromorphic function in $\Re(s)>-1/2$ that is analytic in $\Re(s)\geq 1/2$ and for $-1/2<\Re(s)<1/2$ can be expressed by
\begin{equation}
 \int_\R \varphi(1/2+it,h)c_{t}(f)G_k(s,t)dt-\Phi(s).
\end{equation}

We may repeat this process to get continuation to $s\in \C.$
\end{proof}
\begin{rem}\label{clarifications}
We note that
\begin{equation}\label{simple-residues}
  \res_{s=1/2\pm it_j-l}G_k(s,t_j)=    \frac{(2\pi)^k(-1)^l}{2^{1/2\pm it_j-l} l!}\frac{\Gamma(\pm 2it_j-l)}{\Gamma(-1/2\pm it_j+k-l)\Gamma(1/2\pm it_j-l)},
\end{equation} which makes the residues in Theorem \ref{props-of-D} explicit.
 We note also that the proof gives also the location of all the poles of $D_f(s,h)$ and allows us to compute the residues at all of them including at the residues of Eisenstein series.

 Concerning the bound on $D_f(s,h)$ it can be improved as follows. Assume that the Satake parameters for $u_j$, say $\alpha_j(p)$, $\beta_j(p)$ satisfy
 \begin{equation}
\abs{\alpha_j(p)}, \abs{\beta_j(p)}\leq p^\theta.
 \end{equation}
We call this bound $H(\theta)$. By the Hecke relations for Hecke eigenvalues it follows that the Hecke eigenvalues for $u_j$ satisfy $\abs{\lambda_j(n)}\leq d(n)n^\theta$. By the work of Kim and Sarnak \cite{KimSarnak:2003a} $H(7/64)$ is true, and the Ramanujan--Petersson conjecture predicts that $H(0)$ is true. Assuming $H(\theta)$ we find that the left-hand side of \eqref{FC-bound} is $O(h^{2\theta}T\Delta(hT)^\e)$. Using this in the proof of Theorem \ref{props-of-D}
we find  that for $1/2+\e\leq \sigma\leq 1+\e$ we have the bound
\begin{equation}
D_f(s,h)\ll_{f,\e} h^{1/2 +\theta-\Re(s)}\abs{s}^{1}\abs{hs}^\e.
\end{equation}
Since for $\sigma\geq 1+\e$ we have  $D_f(s,h)\ll_\e 1$, by \eqref{deligne-bound} or \eqref{rankin-selberg}, the Phragm\'en--Lindel\"of principle gives for $1/2+\e\leq \sigma\leq 1+\e$ the estimate
\begin{equation}\label{D-f-bound}
  D_f(s,h)\ll_{f,\e}(h^{\theta}\abs{s})^{2(1-\sigma)}\abs{hs}^\e .
\end{equation}
\end{rem}

\begin{rem}
Note that the bound in Theorem \ref{props-of-D} and its proof is very similar in spirit to \cite[Thm A.1]{Sarnak:2001a}. The reason we get a  better bound than Sarnak is that we move the line of integration further to the left (there are no small eigenvalues for the full modular group) and we use Good's average bound \eqref{Exp-decay}; Sarnak is using individual bounds \cite[(A18)]{Sarnak:2001a}, which are not as strong. A similar analysis is implicit in \cite[p. 459]{Jutila:1996a}.
\end{rem}

From Theorem \ref{props-of-D} combined with the bound in Remark \ref{clarifications} we can now prove the following bound on the smooth sum defined in \eqref{smooth-sum-def}:
\begin{prop}\label{unconditional-smooth}Assume that $H(\theta)$ is true for some $\theta<1/2$. Then
\begin{equation}
  A_f^W(X,h)=O_{f,W}(h^{\theta+\e}X^{1/2+\e})
\end{equation}
for $h=O(X^{\frac{1}{2(1-\theta)}})$.
\end{prop}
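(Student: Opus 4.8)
The plan is to represent $A_f^W(X,h)$ as a contour integral of $D_f(s,h)$ against the Mellin transform of a test function tailored to the normalization of the Fourier coefficients, and then to push the line of integration to $\Re(s)=1/2+\e$, where the bound \eqref{D-f-bound} of Remark \ref{clarifications} is available. First I would absorb the arithmetic-free part of the summand of $D_f(s,h)$ into a single smooth weight. Writing $m=n+h$, one has $n+m+h=2(n+h)$ and $(nm)^{(k-1)/2}=(n(n+h))^{(k-1)/2}$, so, setting $u=2(n+h)$ and
\begin{equation*}
F(u)=\frac{4^{(k-1)/2}}{(u(u-2h))^{(k-1)/2}}\,W\left(\frac{u-h}{2X}\right),
\end{equation*}
one checks directly that $\lambda_f(n)\lambda_f(n+h)W((n+h/2)/X)=\lambda_f(n)\lambda_f(n+h)(n(n+h))^{(k-1)/2}F(2(n+h))$. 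Because $W$ is smooth with support in a fixed compact subinterval of $(0,\infty)$ and $u(u-2h)$ stays bounded away from $0$ on $\supp F$ as long as $h\ll X$, the function $F$ is smooth and compactly supported in $(0,\infty)$, so its Mellin transform $\widetilde F(p)=\int_0^\infty F(u)u^{p-1}\,du$ is entire and decays faster than any power of $|\Im(p)|$ in vertical strips.

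By Mellin inversion and the absolute convergence of $D_f(s,h)$ for $\Re(s)>1$, interchanging sum and integral yields
\begin{equation*}
A_f^W(X,h)=\frac{1}{2\pi i}\int_{(\sigma_0)}\widetilde F(s+k-1)\,D_f(s,h)\,ds,\qquad \sigma_0>1.
\end{equation*}
The change of variables $u=2Xy+h$ makes the $X$-dependence explicit,
\begin{equation*}
\widetilde F(s+k-1)=2^{s+k-1}X^{s}\int_0^\infty W(y)(y+\delta)^{s-1}\left(\frac{y+\delta}{y-\delta}\right)^{(k-1)/2}dy,\qquad \delta=\frac{h}{2X},
\end{equation*}
and repeated integration by parts in $y$, legitimate since the integrand is smooth for $\delta$ bounded away from the left endpoint of $\supp W$, i.e. for $h\ll X$, gives for every $N$ the uniform estimate $\widetilde F(s+k-1)\ll_{f,W,N}X^{\Re(s)}(1+|\Im(s)|)^{-N}$.

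Finally I would shift the contour to $\Re(s)=1/2+\e$. By Theorem \ref{props-of-D}, for $\Gamma=\slz$ the function $D_f(s,h)$ is holomorphic in $\Re(s)>1/2$, its poles in $\Re(s)\ge 1/2$ all lying on the line $\Re(s)=1/2$; hence no residues are crossed, and the horizontal segments vanish by the rapid decay of $\widetilde F$. On $\Re(s)=1/2+\e$ the bound \eqref{D-f-bound} specializes to $D_f(s,h)\ll_{f,\e}h^{\theta+\e}(1+|\Im(s)|)$, and combining this with the estimate for $\widetilde F$ and choosing $N$ large enough that the resulting $\Im(s)$-integral converges gives
\begin{equation*}
A_f^W(X,h)\ll_{f,W,\e}X^{1/2+\e}h^{\theta+\e},
\end{equation*}
as claimed. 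The stated range $h=O(X^{1/(2(1-\theta))})$, which for $\theta=0$ recovers the range of Proposition \ref{smooth-sum}, sits comfortably inside the regime $h\ll X$ that is needed for $F$ to be smooth and for $\Re(s)=1/2+\e$ to be the efficient line of integration.

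I expect the main obstacle to be the bookkeeping in the first two steps: producing the exact test function $F$ that reconstitutes the smooth sum from $D_f(s,h)$, in particular correctly handling the normalizing factor $(n(n+h))^{(k-1)/2}$ and the shift of the summation variable (from $n+h/2$ inside $W$ to $2(n+h)$ in the denominator of $D_f$), and then verifying that $\widetilde F$ enjoys rapid vertical decay with the clean power $X^{\Re(s)}$ of $X$, uniformly in $h\ll X$. Once this integral representation is secured, the contour shift and the final estimate are routine applications of the analytic continuation of Theorem \ref{props-of-D} together with the convexity bound \eqref{D-f-bound}.
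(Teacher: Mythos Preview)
Your argument is correct and follows the same route as the paper: express the smooth sum via Mellin inversion against $D_f(s,h)$, shift the contour to $\Re(s)=1/2+\e$ (no poles for $\slz$), and invoke the bound \eqref{D-f-bound}. The only difference is cosmetic: the paper first bounds the auxiliary sum $\sum_n\lambda_f(n)\lambda_f(n+h)(n/(n+h))^{(k-1)/2}W((n+h)/X)$ by this contour argument and then removes the extra factors via the mean value theorem, incurring an $O(hX^\e)$ correction that is responsible for the range restriction $h\ll X^{1/(2(1-\theta))}$; by absorbing the weight directly into $F$ you sidestep that correction, so your version actually delivers the bound throughout $h\ll_W X$.
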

\begin{proof}
By a standard complex contour argument using  Mellin inversion, see e.g. \cite[Appendix A]{Nordentoft:2018}, we find
\begin{equation}\label{weighted-bound}
  \sum_{n\in\N}\lambda_f(n+h)\lambda_f(n)\left(\frac{n}{n+h}\right)^{\frac{k-1}{2}}W((n+h)/X)\ll_{f,W} h^{\theta+\e}X^{1/2+\e}.
\end{equation}
Using the mean value theorem we easily find that
\begin{align}
\label{weight-trivial}\left(\frac{n}{n+h}\right)^{\frac{k-1}{2}}&=1+O((k-1){h}/{n}),\\
  W\left(\frac{n+h}{X}\right) &=W\left(\frac{n+h/2}{X}\right)+O_W\left(\frac{h}{X}\right).
\end{align}
Combining this with \eqref{deligne-bound} or
  \eqref{rankin-selberg} we see that the left-hand side of \eqref{weighted-bound} equals
\begin{equation}
  A_f^W(X,h)+O\left(hX^{\e}\right),
\end{equation}
from which the claim follows easily.
\end{proof}
\begin{rem}
Proposition \ref{unconditional-smooth} should be compared with Blomer's bound \cite[Thm 1.3]{Blomer:2004a} as well as Blomer and Harcos's bound \cite[Eq (5)]{BlomerHarcos:2008a}. These results prove a similar bound. Blomer uses Jutila’s variant of the circle method  combined with the spectral large sieve inequalities of Deshouillers and Iwaniec \cite{DeshouillersIwaniec:1982a}, whereas Blomer and Harcos use the spectral theory on $\hbox{GL}_2$ rather than $\hbox{SL}_2$.
The results of Blomer and Harcos provides bounds which are more uniform in the various parameters. The above argument serves as a simpler proof, which recovers the same strength in the $X$-parameter. Note that for the divisor function  and for $r_2(n)$ (case \ref{divisor-sums} and case \ref{hyp-lattice} in the introduction) we have $\Omega $ results in $X$ for the error term, see \cite{IvicMotohashi:1995} and \cite{PhillipsRudnick:1994a}.  These are, up to logarithms, of the same order as the upper bound in Proposition \ref{unconditional-smooth} as $X$ tends to infinity.

\end{rem}

In order to get good estimates on non-smooth sums we use Jutila's explicit formula, which we now explain. For this it is convenient to use a slightly modified sum namely
\begin{equation}B_f(X,h)=\sum_{n+h\leq X}\lambda_f(n+h)\lambda_f(n)\left(\frac{n}{n+h}\right)^{\frac{k-1}{2}}.\end{equation}
Note that by Deligne's bound \eqref{deligne-bound}
and the trivial estimate \eqref{weight-trivial} we have for  $h\leq X$
\begin{equation}\label{B-to-A}B_f(X,h)=\sum_{n\leq X }\lambda_f(n+h)\lambda_f(n)+O(hX^{\e})=A_f(X,h)+O(hX^\e).\end{equation}
For $l=0,1,2,\ldots$ and $t\neq 0$ we define
\begin{equation}
  \gamma_{t,l}=\frac{(4\pi)^k}{2}\frac{(-1)^l}{l!}\frac{\Gamma(2it-l)}{\Gamma(1/2+it-l+k-1)\Gamma(1/2+it-l+1)},
\end{equation}
and note that for $\abs{t}$ bounded away from zero we have, by Stirling's formula,
\begin{equation}\label{stirling-factors}
  \gamma_{t,l}=\frac{(4\pi)^k}{2}\frac{(-1)^l}{l!\sqrt{2\pi}}\frac{4^{it}}{2^{l+1/2}}\frac{(it)^l}{(it)^{k+1/2}}(1+O(\abs{t}^{-1})).
\end{equation}

\begin{prop}\label{spectral-expansion} Assume that $1\leq h, T\leq X$, and $Th\leq X^{1-\delta}$ for some $\delta>0$. Then there exists a $B>0$ such that
\begin{equation}B_f(X,h)= X^{1/2}\sum_{\substack{t_j\leq T,\, \pm\\ 0\leq l\leq B}}{a_j(h)c_j(f)}\gamma_{\pm t_j, l}(X/h)^{\pm i t_j-l} +O((X/T+X^{1/2})X^\e)\end{equation}
\end{prop}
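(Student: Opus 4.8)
The plan is to recover $B_f(X,h)$ from the Dirichlet series $D_f(s,h)$ by Mellin inversion, move the contour to the left, and read off the main term from the residues at the spectral poles located in Theorem \ref{props-of-D} and made explicit in Remark \ref{clarifications}. First I would realise $B_f(X,h)$ as a partial sum of the coefficients of $D_f$: writing $m=n+h$ one has $n+m+h=2(n+h)$, so that $D_f(s,h)=2^{-(s+k-1)}\widetilde D_f(s,h)$ where $\widetilde D_f(s,h)=\sum_{n}\lambda_f(n+h)\lambda_f(n)(n/(n+h))^{(k-1)/2}(n+h)^{-s}$ has coefficients exactly the summands of $B_f(X,h)$. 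Truncated Perron at height $T$ then gives
\[
B_f(X,h)=\frac{2^{k-1}}{2\pi i}\int_{\sigma_0-iT}^{\sigma_0+iT}D_f(s,h)\frac{(2X)^s}{s}\,ds+O(X^{1+\e}/T),\qquad \sigma_0=1+\e,
\]
where the truncation error, controlled by Deligne's bound \eqref{deligne-bound}, already produces one of the two target error terms.

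Next I would shift the segment to $\Re(s)=-B$ for a large integer $B$, keeping the height $\pm T$. By Theorem \ref{props-of-D} and Remark \ref{clarifications} the poles crossed are exactly $s=1/2\pm it_j-l$ with $t_j\le T$ and $0\le l\le B$: the pole of $1/s$ at $s=0$ is cancelled by the $\Gamma(s)$ in the denominator of $G_k$, and the continuous spectrum contributes no pole in $\Re(s)\ge 1/2$ since there are no exceptional eigenvalues for the full modular group. Inserting the residue of $D_f$ at $s=1/2\pm it_j-l$ together with \eqref{simple-residues}, the factor $(2X)^s h^{-(\pm it_j-l)}$ collapses to $2^{1/2\pm it_j-l}X^{1/2}(X/h)^{\pm it_j-l}$, the power $2^{1/2\pm it_j-l}$ cancels the matching factor in $\res G_k$, the leftover $1/s$ turns $\Gamma(1/2\pm it_j-l)$ into $\Gamma(3/2\pm it_j-l)$, and $2^{k-1}(2\pi)^k=(4\pi)^k/2$; what survives is precisely $X^{1/2}a_j(h)c_j(f)\gamma_{\pm t_j,l}(X/h)^{\pm it_j-l}$. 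Summing over the poles reproduces the stated main term, while the finitely many extra poles of the continued Eisenstein part (the $\Phi(s)$-poles, all in $\Re(s)\le 0$) contribute only $O(X^\e)$.

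It then remains to bound the three remaining sides of the rectangle. On the far-left line $\Re(s)=-B$ one has $\abs{(2X)^s}=(2X)^{-B}$ while the continued bound for $D_f$ is at most $h^{1/2+B}\,\mathrm{poly}(T)$; since $h\le X^{1-\delta}$ the product is $\ll X^{1/2-\delta(1/2+B)}\mathrm{poly}(T)$, which is $\ll X^{1/2}$ once $B$ is taken large in terms of $\delta$ — this is exactly the unspecified $B$ in the statement. The horizontal segments at height $\pm T$ are the heart of the matter: I would bound $\abs{D_f(\sigma+iT,h)}(2X)^\sigma/T$ over $1/2\le\sigma\le 1+\e$ using $D_f\ll X^\e$ near $\sigma=1$ (from \eqref{D-f-bound}), which yields the $X/T$ contribution, and the critical-line bound $D_f(1/2+iT,h)\ll(T+T^{1/2}h^{1/4})(hT)^\e$ of Theorem \ref{props-of-D} near $\sigma=1/2$. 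Since $\log\abs{D_f}$ is convex in $\sigma$ (Phragmén--Lindelöf), the integral is dominated by its endpoints and is $\ll(X/T+X^{1/2}+h^{1/4}T^{-1/2}X^{1/2})X^\e$; the hypothesis $Th\le X^{1-\delta}$ is precisely what forces $h^{1/4}T^{-1/2}X^{1/2}\ll X/T$, so the horizontal segments contribute $O((X/T+X^{1/2})X^\e)$.

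The hard part is this horizontal estimate, because it is where the truncation of the spectral sum at $t_j\le T$ is genuinely effected: the truncated series does not converge absolutely, so one must exploit the resonance localisation of $G_k(\sigma+iT,t_j)$ near $t_j=T$ together with the average bounds \eqref{Exp-decay}, \eqref{FC-bound} (equivalently \eqref{lets-get-it-right}) rather than term-by-term estimates. The remaining work — running the identical contour argument for the continuous-spectrum integral in parallel, and checking that the $\Phi(s)$-residues and the $l>B$ contributions are negligible — is routine given the bounds already established in Theorem \ref{props-of-D}.
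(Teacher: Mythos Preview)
Your skeleton—Perron at height $T$, contour shift, residues at the spectral poles producing the $\gamma_{\pm t_j,l}$ terms—is the same as the paper's, and your residue computation is correct. The gap is in how you handle the region $\Re(s)<1/2$. You propose to shift the full $D_f(s,h)$ in one move from $\sigma_0=1+\e$ to $\Re(s)=-B$, but Theorem~\ref{props-of-D} only gives a bound on $D_f$ for $1/2+\e\le\Re(s)\le 1+\e$. Your asserted estimate $D_f(-B+i\tau,h)\ll h^{1/2+B}\,\mathrm{poly}(T)$ is not established anywhere, and your horizontal-segment analysis explicitly covers only $1/2\le\sigma\le 1+\e$; the portion $-B\le\sigma\le 1/2$ is never touched. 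In that strip $D_f$ carries the full infinite spectral sum together with the $\Phi(s)$ corrections from the Eisenstein continuation, and near each line $\Re(s)=1/2-l$ the horizontal segment passes close to the poles $1/2\pm it_j-l$ for \emph{all} $t_j$, not merely $t_j\le T$. Your Phragm\'en--Lindel\"of step therefore lacks one of its two endpoint bounds.

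The paper circumvents exactly this difficulty by a two-stage shift. First it moves only to $\Re(s)=1/2+\e$, where Theorem~\ref{props-of-D} applies and the horizontal pieces are controlled as you describe. On that line it then replaces $D_f$ by the truncated expansion $S_T(s,h)=S_T^d(s,h)+S_T^c(s,h)$, the tail $t_j>T$ being exponentially small because there the spectral parameter exceeds $\abs{\Im s}$. Only the \emph{finite} discrete sum $S_T^d$ is then pushed to $\Re(s)=-B$: being a finite sum of explicit meromorphic functions, its bounds on the new horizontal and left vertical segments are elementary, its only poles are those with $t_j\le T$ (so the truncation is built in rather than inherited from the contour height), and one may shift the height to $T+1$ to stay uniformly away from them. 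The continuous piece $S_T^c$ is handled by a separate, shorter contour argument and contributes only $O(X^{1/2+\e})$. Your direct route would require proving the missing $D_f$ bounds in $\Re(s)<1/2$ from scratch, which is at least as much work as the paper's detour and is not ``routine given the bounds already established in Theorem~\ref{props-of-D}''.
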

A similar result is stated in \cite[(1.29)]{Jutila:1996a}. For the reader's convenience we sketch a proof:
\begin{proof}
  Perron's formula, see e.g. \cite[Cor. 5.3]{MontgomeryVaughan:2007a}, gives  for $T\gg 1$
  \begin{align}
    B_f(X,h)=\frac{1}{2\pi i}\int_{1+\e-iT}^{1+\e+iT}2^{s+k-1}D_f(s,h)\frac{X^s}{s}ds+R,
  \end{align}
  where
  \begin{equation}
    R\ll \sum_{X/2<(n+h)<2X}\abs{\lambda_f(n+h)\lambda_f(n)}\min\left(1,\frac{X}{T\abs{X-(n+h)}}\right)+\frac{X}{T}X^\e.
  \end{equation}
  Here we have used that
  \begin{equation}
  \sum_{n=1}^\infty\frac{\abs{\lambda_f(n+h)\lambda_f(n)}}{(n+h)^{1+\e}}\ll_\e 1,
  \end{equation} as follows from the Rankin--Selberg bound \eqref{rankin-selberg} or from Deligne's bound \eqref{deligne-bound}. Using Deligne's bound once more, we can bound the sum in $R$ and find  $R=O(({X}/{T})X^\e)$.

We now move the line of integration to $\Re(s)=1/2+\e$. The contribution from the corresponding horizontal lines can be analyzed as follows. Using Theorem \ref{props-of-D} and the estimate $D_f(s,h)\ll_\e 1$ for $\Re(s)=1+\e$, we note that on both horizontal lines $\Re(s)=1/2+\e, 1+\e$ we have
\begin{equation} \label{convexity-bound}
    D_f(s,h)\frac{X^s}{s}\ll_{f,\e}\left(\frac{X}{\abs{s}}+\frac{X^{1/2}h^{1/4}}{\abs{s}^{1/2}}+X^{1/2}\right)(X\abs{s})^\e.
\end{equation}
Let $g(s)=\left(Xs^{-1}+X^{1/2}h^{1/4}s^{-1/2}+X^{1/2}\right)(Xs)^\e$, where we have chosen the principal powers. It is elementary to verify that for $\Re(s)>\e$ we have
\begin{equation*}
    \abs{g(s)}\gg \left(\frac{X}{\abs{s}}+\frac{X^{1/2}h^{1/4}}{\abs{s}^{1/2}}+X^{1/2}\right)(X\abs{s})^\e.
\end{equation*}
We therefore have $D_f(s,h)/g(s)\ll 1$ on both lines $\Re(s)=1/2+\e, 1+\e$ with a constant not depending on $h, X$ and we may conclude from the Phragm\'en--Lindel\"of principle that  \eqref{convexity-bound} holds for all $1/2+\e\leq \Re(s)\leq +\e$. It follows that the contributions of the horizontal lines in the integral are bounded by a constant times
\begin{equation*}
    \left(\frac{X}{T}+\frac{X^{1/2}h^{1/4}}{T^{1/2}}+X^{1/2}\right)X^\e.
\end{equation*}
Note that since $h\leq X$ the middle term is bounded by $X^{3/4}/T^{1/2}$, which is always smaller than one of the two remaining terms, so it may be ignored.

We now define \begin{equation}
  S_T(s,h)=S_T^d(s,h)+S_T^c(s,h)
\end{equation}
with
\begin{align}
S_T^d(s,h)= & \frac{1}{h^{s-1/2}}\sum_{t_j\leq T} a_j(h) c_j(f)G_k(s,t_j),\\
S_T^c(s,h)=&  \frac{1}{h^{s-1/2}}\frac{1}{4\pi}\int_{\abs{u}\leq T} \varphi(1/2+iu,h)c_{u}(f)G_k(s,u)du.
\end{align}

For $s$ with $\Re(s)=1/2+\e$ and $\abs{\Im{s}}\leq T$ we bound  $D_f(s,h)-S_T(s,h)$ as in the proof of Theorem \ref{props-of-D}. Note that in all remaining contributions coming from \eqref{another-expansion} the bound on $G_k(s,t)$ has a factor of $e^{-\pi(\abs{t}-\abs{\tau})}\leq e^{-\pi(T-\abs{\Im(s)})}e^{-\pi/2(\abs{t}-\abs{\tau})}$. We may pull out the factor $e^{-\pi(T-\abs{\Im(s)})}$ and bound the rest exactly as in Theorem \ref{props-of-D} with the same result and we find under these conditions that
\begin{equation*}
    D_f(s,h)=S_T(s,h)+O_f(e^{-\frac{\pi}{2}(T-\abs{\Im(s)})}(\abs{s}+\abs{s}^{1/2}h^{1/4})\abs{sh}^\e).
\end{equation*}
Inserting this we find
\begin{equation}
  B_f(X)=\frac{1}{2\pi i}\int_{1/2+\e-iT}^{1/2+\e+iT}2^{s+k-1}S_T(s,h)\frac{X^s}{s}ds+O\left(\left(\frac{X}{T}+ X^{1/2}\right)X^\e\right).
\end{equation}
We now again bound all terms in $S_T(s,h)$ exactly as in the proof of Theorem \ref{props-of-D} and find that on $\Re(s)=1/2+\e$ we have
\begin{equation}\label{morebounds}
  S_T^d(s,h), S_T^c(s,h)=O\left((\abs{s}+\abs{s}^{1/2}h^{1/4})\abs{sh}^\e\right).
\end{equation}
We can therefore extend the $s$-integral to the line from $1/2+\e-i(T+1)$ to $1/2+\e+i(T+1)$ at no additional cost; this is useful in order to keep the distance from the poles when we move the line of integration further to the left.

We now analyze the integral in
\begin{equation}
  B_f(X)=\frac{1}{2\pi i}\int_{1/2+\e-i(T+1)}^{1/2+\e+i(T+1)}2^{s+k-1}S_T(s,h)\frac{X^s}{s}ds+O\left(\left(\frac{X}{T}+ X^{1/2}\right)X^\e\right)
\end{equation}
by analyzing the contributions from $S_T^d(s,h)$ and $S_T^c(s,h)$ separately.

To analyze the contribution coming from $S_T^d$ we move the line of integration to $\Re(s)=-B$ picking up poles of $S_T^d(s,h)$ at the poles of the Gamma functions and at 0. The poles of the Gamma function contribute exactly the sum in the theorem as can be seen from \eqref{simple-residues}, so we need to show that for a suitable choice of $B$ all other contributions give admissible errors.

Note that $G_k(t,0)=0$ so $S_T^d(0,h)=0$ and 0 is a removable pole of the integrand. We note that by the proof of Theorem \ref{props-of-D} we have \begin{equation}
    S_T^d(s,h)\ll_{B,f} h^{1/2-\sigma}\frac{\abs{s}^{1/2}+h^{1/4}}{(1+\abs{s})^{\sigma-1}}\abs{sT}^\e, \quad \sigma\leq 1/2+\e.
\end{equation}
We recall that all terms were in absolute value, so a short sum is bounded by a longer one. This implies that the contribution from the horizontal lines can be bounded by
\begin{align*}
\int_{-B\pm i(T+1)}^{1/2+\e\pm i(T+1)}2^{s+k-1}S_T(s,h)\frac{X^s}{s}ds\ll
& h^{1/2}(T^{1/2}+h^{1/4}) X^\e\int_{-B}^{1/2+\e}\left(\frac{X}{hT}\right)^\sigma d\sigma \\
& \ll (X/T+X^{1/2})X^\e,
\end{align*}
which is an admissible error. The contribution from the line $\Re(s)=-B$ is bounded by
\begin{align*}
\int_{-B- i(T+1)}^{-B+ i(T+1)}2^{s+k-1}S_T(s,h)\frac{X^s}{s}ds\ll
& \left(\frac{X}{h}\right)^{-B}h^{1/2}\int_{-(T+1)}^{T+1}\frac{(1+\abs{\tau})^{1/2}+h^{1/4}}{ (1+\abs{\tau})^{-B}} d\tau X^{\e} \\
& \ll \left(\frac{X}{Th}\right)^{-B} h^{1/2}(T^{3/2}+h^{1/4}T)X^{\e}.
\end{align*}
Since $Th\leq X^{1-\delta}$, the constant $B$ may be chosen large enough such that this is also an admissible error. This finishes the analysis of the discrete part of the spectrum.

To analyze the contribution from the continuous spectrum  $S_T^c(s)$ we need, after changing the order of integration, to estimate
\begin{align}\label{does-it-end}
\frac{1}{4\pi}\int_{\abs{t}\leq T}\varphi(1/2+it,h)c_t(f)\frac{1}{2\pi i}\int_{1/2+\e-i(T+1)}^{1/2+\e+i(T+1)}2^{s+k-1}\frac{G_{k}(s,t)}{h^{s-1/2}}\frac{X^s}{s}dsdt .
\end{align}
After moving the line of integration to $-B'$ the inner integral equals \begin{equation}
\sum_{\substack{\pm\\ l\leq B'}}  \gamma_{t,l}\frac{X^{1/2\pm it-l}}{h^{\pm it-l}} + \frac{1}{2\pi i}\int_{-B'-i(T+1)}^{-B'+i(T+1)}2^{s+k-1}\frac{G_{k}(s,t)}{h^{s-1/2}}\frac{X^s}{s}ds+O(\frac{X^{1/2+\e}}{T^{k+1/2}}).
\end{equation}
Bounding using Stirling's formula on all Gamma factors we find that this is bounded by a constant times
\begin{equation}
\frac{X^{1/2}}{(1+\abs{t})^{k+1/2}}\sum_{l\leq B' }\left(\frac{X}{h(1+\abs{t})}\right)^{-l}+\left(\frac{X}{hT}\right)^{-B'}\frac{T^2h^{1/2}}{T^k}+\frac{X^{1/2+\e}}{T^{k+1/2}}.
\end{equation}
 Inserting that back in \eqref{does-it-end} we see that for $B'$ large enough the contribution from  this part of the $u$-integral is $O(X^{1/2+\e})$.  We conclude that the contribution of the continuous spectrum is $O(X^{1/2+\e})$.
\end{proof}

We notice that by \eqref{stirling-factors} the factors in the spectral expansion in Proposition \ref{spectral-expansion} for $\abs{t}$ bounded away from zero  can be bounded as
\begin{equation}\label{coefficient-bound}
  \abs{\gamma_{t,l}}\ll \frac{1}{l!}\frac{\abs{t}^l}{\abs{t}^{k+1/2}}.
\end{equation}
Bounding everything trivially, i.e. using \eqref{Exp-decay} and \eqref{FC-bound}, we find that
\begin{equation}B_f(X,h)=O((X^{1/2}T^{1/2}+ X^{1/2}h^{1/4/T^{1/2}}+X/T+X^{1/2})X^\e).\end{equation}
We choose $T={X^{1/3-\e}}$ to balance the error terms. This gives
\begin{equation}\label{2/3-bound}
  B_f(X,h)=O(X^{2/3+\e})\end{equation}
uniformly for  $h\ll X^{2/3}$. Combined with \eqref{B-to-A}  this recovers \eqref{jutila-bound}.
\subsection{Mean square bounds}

In order to prove Theorem \ref{mean-square-theorem} we first state a general result loosely stating that the mean square of a function with  certain properties is small if and only the function only has few well-spaced points at which the function is large. The proof is a relatively straightforward adaptation of \cite[Sec.4]{IvicMotohashi:1994a}.
\begin{lem}\label{technical-lemma} Consider a measurable function $\abs{B(X,h)}$ defined for $X\geq 2$. Assume that $\abs{B(X,h)}\leq X^A$ for some $A>0$ independent of $h$, and that for $1\leq Y\leq X$
  \begin{equation}
\abs{B(X,h)}^2\ll_\e \frac{1}{Y}\int_{X}^{X+Y}\abs{B(t,h)}^2dt+ Y^2X^\e
\end{equation} uniformly for $h$ in some set $A_{X}$. Let $C_h>0$ be a constant depending on $h$.
Then the following are equivalent:
\begin{enumerate}
  \item For every $\e>0$ we have
  \begin{equation}
    \int_{X}^{2X}\abs{B(t,h)}^2dt\ll_\e C_hX^{2+\e}
\end{equation} uniformly in $h\in A_X$.
  \item \label{second-equivalent} Given $X\leq X_1<X_2<\cdots <X_{R}\leq 2X$ with spacings $X_{r+1}-X_r\geq V\geq X^{1/2+\e}$ and satisfying $\abs{B(X_r,h)}\geq V$. Then
  \begin{equation}R\ll_\e C_h X^{2+\e}V^{-3}
  \end{equation}
  uniformly in $h\in A_X$.
\end{enumerate}
\end{lem}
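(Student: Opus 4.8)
The argument rests on two tools: the local mean-square hypothesis, which converts pointwise largeness of $\abs{B}$ into largeness of a short local integral, and a layer-cake (distribution-function) decomposition of $\int_X^{2X}\abs{B(t,h)}^2dt$. The plan is to prove the two implications separately, the local inequality driving (1)$\Rightarrow$(2) and the layer-cake driving (2)$\Rightarrow$(1).

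To deduce (2) from (1), suppose I am given $V$-spaced points $X_1<\cdots<X_R$ in $[X,2X]$ with $\abs{B(X_r,h)}\geq V$ and $V\geq X^{1/2+\e}$. I may assume $V\leq X$, since otherwise the spacing exceeds the length of the interval, forcing $R\leq 1$, a case dispatched by the pointwise bound on $\abs{B}$ that (1) provides through the local inequality. I would then set $Y=c_\e VX^{-\e/2}$ with $c_\e$ small; this lies in $[1,X]$ and makes the error term obey $Y^2X^\e\leq \tfrac12 V^2$, so the hypothesis gives $\int_{X_r}^{X_r+Y}\abs{B(t,h)}^2dt\gg_\e V^2Y$ for each $r$. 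Since $Y\leq V$ is at most the spacing, these intervals are pairwise disjoint and contained in $[X,3X]$; summing over $r$ and applying (1) to the two dyadic blocks covering $[X,3X]$ yields $R\,V^2Y\ll C_hX^{2+\e}$, whence $R\ll C_hX^{2+\e}V^{-3}$ after substituting $Y\asymp VX^{-\e/2}$.

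For the reverse implication I would write, via the layer-cake formula,
\[
\int_X^{2X}\abs{B(t,h)}^2dt=\int_0^\infty 2V\,\operatorname{meas}\{t\in[X,2X]:\abs{B(t,h)}\geq V\}\,dV.
\]
The range $V<X^{1/2+\e}$ contributes at most $\int_0^{X^{1/2+\e}}2V\cdot X\,dV=X^{2+2\e}$ by the trivial bound $\operatorname{meas}\leq X$, which is acceptable after adjusting $\e$. For $V\geq X^{1/2+\e}$ I would extract from the level set a maximal $V$-spaced subfamily $X_1<\cdots<X_R$; by maximality the level set is covered by the intervals $[X_r-V,X_r+V]$, while hypothesis (2) applies (these points are $V$-spaced with $\abs{B(X_r,h)}\geq V$) and gives $R\ll C_hX^{2+\e}V^{-3}$. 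Hence $\operatorname{meas}\{\abs{B(t,h)}\geq V\}\leq 2VR\ll C_hX^{2+\e}V^{-2}$, and since $\abs{B}\leq X^A$ truncates the $V$-integral at $(2X)^A$, integrating $2V\cdot C_hX^{2+\e}V^{-2}$ over $[X^{1/2+\e},(2X)^A]$ costs only a logarithmic factor $\ll X^\e$, giving the desired $\ll C_hX^{2+\e}$.

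The one place demanding care is the choice of $Y$ in the first implication: it must be large enough for $V^2Y$ to be a useful lower bound, small enough for $Y^2X^\e$ to be absorbed into $V^2$, and bounded by both the spacing $V$ and by $X$. Balancing these constraints is exactly what forces $Y\asymp VX^{-\e}$ and thereby pins down the exponent $-3$ of $V$ in (2). The complementary large-$V$ regime and the alignment of the sets $A_X$ across dyadic scales are minor bookkeeping points; the remaining steps are the routine disjointness and distribution-function estimates, following \cite[Sec.~4]{IvicMotohashi:1994a}.
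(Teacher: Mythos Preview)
Your proof is correct and follows exactly the standard large-values argument from \cite[Sec.~4]{IvicMotohashi:1994a} that the paper itself cites without reproducing; you have supplied precisely the adaptation the paper omits. One cosmetic point: in your (2)$\Rightarrow$(1) direction the small-$V$ contribution $X^{2+2\e}$ carries no factor $C_h$, which is harmless here since in the paper's application $C_h=1+h\geq 1$, but strictly speaking the lemma as stated needs $C_h$ bounded below.
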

To see that we may apply Lemma \ref{technical-lemma} to $B_f(X,h)$ we note that by Deligne's bound \eqref{deligne-bound} we have
\begin{equation}
\abs{B_f(X,h)}\leq \sum_{n+h\leq X}{d(n+h)d(n)}\ll X^{1+\e}
\end{equation} uniformly in $h$. Also using Deligne's bound we have for any  with $0\leq t\leq X$  \begin{equation}
 B_f(X+t,h)-B_f(X,h)\ll_\e  \max{(1, \abs{t})}X^{\e}
 \end{equation}
uniformly in $h$. Integrating over $t$ we find that for $1\leq Y\leq X$
\begin{equation}
\frac{1}{Y}\int_{X}^{X+Y}B_f(t,h)dt-B_f(X,h)\ll_\e YX^{\e},
 \end{equation}
and using Cauchy--Schwarz we find
\begin{equation}
\abs{B_f(X,h)}^2\ll_\e \frac{1}{Y}\int_{X}^{X+Y}\abs{B_f(t,h)}^2dt +  Y^2X^\e.
\end{equation}
We conclude that the assumptions of Lemma \ref{technical-lemma} are satisfied.

We now verify that Lemma \ref{technical-lemma} \eqref{second-equivalent} holds:  Let $X\leq X_1<X_2<\cdots <X_{R}\leq 2X$ with spacing $X_{r+1}-X_r\geq V\geq X^{1/2+\e}$ and satisfying $\abs{B_f(X_r,h)}\geq V$. We may assume, without loss of generality, that \begin{equation}V\leq X^{2/3+\e},\end{equation} for if $V\geq X^{2/3+\e}$ it follows from \eqref{2/3-bound} that for $X$ sufficiently large there does not exist any $X\leq X_r\leq 2 X$ satisfying $\abs{B_f(X_r,h)}\geq V$ and in this case the condition is clear.

We assume $R\geq 1$. For each $r$ we choose a complex number $\theta_r$ with norm 1 such that $\theta_r B_f(X_r,h)\geq V$.   Now Proposition \ref{spectral-expansion} gives that when $hT\leq X^{1-\delta}$ we have
\begin{align}
RV &\leq \sum_{\substack{r=1}}^R \theta_r B_f(X_r,h)\\
& =\sum_{\substack{t_j\leq T,\, \pm\\ l\leq B}}{a_j(h)c_j(f)}\gamma_{\pm t_j, l}\sum_{\substack{r=1}}^R\theta_r X_r^{1/2}(X_r/h)^{\pm i t_j-l} +O(R( X/T+X^{1/2})X^{\e/3}).\label{bound-this}
\end{align}
Using dyadic decomposition,  Cauchy--Schwarz, \eqref{coefficient-bound}, and \eqref{Exp-decay}, the sum can be bounded by
\begin{align}
  \sum_{\substack{l\leq B\\\pm}}&\log T\max_{T'\leq T}\abs{\sum_{t_j \sim T'}{a_j(h)c_j(f)}\gamma_{\pm t_j, l}\sum_{\substack{r=1}}^R\theta_r X_r^{1/2}(X_r/h)^{\pm i t_j-l}}\\
  &\leq\sum_{\substack{l\leq B\\\pm}}\log T\max_{T'\leq T}T'^{l-1/2}\left(\sum_{t_j \sim T'}\frac{1}{\cosh(\pi t_j)}\abs{a_j(h)\sum_{\substack{r=1}}^R\theta_r X_r^{1/2}(X_r/h)^{\pm i t_j-l}}^2\right)^{1/2},
\end{align}
where the index $t_j\sim  T$ means that we are summing over $t_j$ satisfying $T\leq t_j\leq 2T$.
In order to estimate this we use a large sieve inequality due to Jutila \cite{Jutila:2000a}:
\begin{thm}\label{jutila-sieve} Let $X\leq X_1<X_2<\cdots <X_{R}\leq 2X$ with spacing $X_{r+1}-X_r\geq V>0$, and let $1\leq T$, $1\leq \Delta\leq T$ and $1\leq N$. Then for any $\e>0$ we have
  \begin{align}\sum_{T\leq t_j\leq T+\Delta} \frac{1}{\cosh(\pi t_j)}&\abs{\sum_{N\leq n\leq 2N}\sum_{r=1}^R b_{n,r} a_j(n)X_r^{\pm it_j}}^2
    \\ &\ll_\e (T+N)(\Delta+\frac{X}{V}\log(\frac{2X}{V}))\norm{b}^2(TN)^\e
  \end{align}
where $b=(b_{n,r})$ is any complex vector and $\norm{b}$ is the standard  $l^2$-norm of $b$.
\end{thm}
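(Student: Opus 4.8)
The plan is to prove the inequality directly from the Kuznetsov trace formula rather than by reduction to an existing spectral large sieve: the well-spacing of the $X_r$ enters only through the multiplicative differences $L_{r,r'}:=\log(X_r/X_{r'})$, and these are exactly what the geometric side of the trace formula isolates. Note first that, since $X_{r+1}-X_r\geq V$ and all $X_r\in[X,2X]$, one has $\abs{L_{r,r'}}\geq\abs{r-r'}V/(2X)$.

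First I would remove the sharp cutoff by choosing an even weight $h(t)\geq0$, holomorphic and of rapid decay in a fixed horizontal strip, with $h(t)\geq1$ on $[T,T+\Delta]$ (and on $[-T-\Delta,-T]$) and essentially supported there. Writing the left-hand side of the theorem as $\sum_{T\le t_j\le T+\Delta}\cosh(\pi t_j)^{-1}\abs{A_j}^2$ with $A_j=\sum_{N\le n\le 2N}\sum_{r}b_{n,r}a_j(n)X_r^{it_j}$, positivity of $h$ gives
\[\sum_{T\le t_j\le T+\Delta}\frac{\abs{A_j}^2}{\cosh(\pi t_j)}\le \sum_{j}\frac{h(t_j)\abs{A_j}^2}{\cosh(\pi t_j)}\le \sum_{j}\frac{h(t_j)\abs{A_j}^2}{\cosh(\pi t_j)}+\frac{1}{4\pi}\int_{\R}\frac{h(t)\abs{A(t)}^2}{\cosh(\pi t)}\,dt,\]
where $A(t)=\sum_{n,r}b_{n,r}\varphi(1/2+it,n)X_r^{it}$ is the Eisenstein analogue of $A_j$; both added terms are nonnegative. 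The right-hand side is the full spectral side of Kuznetsov's formula applied to each pair $(n,r),(n',r')$ and summed against $b_{n,r}\overline{b_{n',r'}}$, so it equals $\mathcal D+\mathcal K$, the diagonal and Kloosterman contributions of the geometric side. Thus it suffices to bound $\mathcal D$ and $\mathcal K$.

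Next, the diagonal. Up to an absolute constant,
\[\mathcal D=\frac{1}{\pi^2}\sum_{N\le n\le 2N}\sum_{r,r'}b_{n,r}\overline{b_{n,r'}}\int_{\R}h(t)\Big(\frac{X_r}{X_{r'}}\Big)^{it}t\tanh(\pi t)\,dt.\]
Since $h$ is concentrated at $t\asymp T$ on a window of length $\Delta$, the inner integral is $\ll T\min(\Delta,\abs{L_{r,r'}}^{-1})$. Applying $\abs{b_{n,r}}\,\abs{b_{n,r'}}\le\tfrac12(\abs{b_{n,r}}^2+\abs{b_{n,r'}}^2)$ and summing the weights, the spacing bound gives, uniformly in $r$,
\[\sum_{r'}\min(\Delta,\abs{L_{r,r'}}^{-1})\ll \Delta+\frac{X}{V}\log\frac{2X}{V},\]
the term $\Delta$ coming from $r'=r$ and the logarithm from the harmonic tail of the off-diagonal pairs. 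Hence $\mathcal D\ll T\big(\Delta+\tfrac{X}{V}\log\tfrac{2X}{V}\big)\norm{b}^2$, which supplies the factor $T$ in $(T+N)$.

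Finally, the Kloosterman term, which is where the real work lies. Here
\[\mathcal K=\sum_{n,n',r,r'}b_{n,r}\overline{b_{n',r'}}\sum_{c\ge1}\frac{S(n,n';c)}{c}\,\Phi_{r,r'}\!\Big(\frac{4\pi\sqrt{nn'}}{c}\Big),\]
with $\Phi_{r,r'}$ the Bessel transform of $h(t)(X_r/X_{r'})^{it}$. Using the standard behaviour of the Bessel kernel one sees that $\Phi_{r,r'}$ is negligible unless its argument is $\gtrsim T$, i.e. $c\ll N/T$, while the modulation by $(X_r/X_{r'})^{it}$ again produces a factor $\ll\min(\Delta,\abs{L_{r,r'}}^{-1})$ by stationary phase, exactly as for $\mathcal D$. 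Bounding the Kloosterman sums by Weil's estimate $\abs{S(n,n';c)}\ll d(c)\,c^{1/2}(n,n',c)^{1/2}$ and summing over $c,n,n'$ as in Deshouillers--Iwaniec's treatment of the spectral large sieve should then give $\mathcal K\ll N\big(\Delta+\tfrac{X}{V}\log\tfrac{2X}{V}\big)(TN)^\e\norm{b}^2$, the factor $N$ completing $(T+N)$. Carrying out this last estimate, keeping the $c$-, $n$- and $n'$-sums under control while retaining the $\min(\Delta,\abs{L_{r,r'}}^{-1})$ saving uniformly over all pairs $(r,r')$, is the main obstacle; the remaining steps are the clean spacing computation already used for $\mathcal D$.
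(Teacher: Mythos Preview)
The paper does not prove this from scratch: its entire proof is a one-line citation, specializing Jutila's general spectral large sieve \cite[Theorem~4.1]{Jutila:2000a} to the parameter values $\varphi_n(x,y)=1$, $\psi_n(x,y)=\pm x\log(1+y)/(2\pi)$, $\Psi=\Psi'=T$, $\lambda=\Phi=1$, $y_r=X_r/X-1$, $\delta=V/X$. So you are not comparing against an argument in the paper but attempting to reconstruct Jutila's theorem itself via Kuznetsov, which is indeed the route Jutila takes.

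Your diagonal analysis is fine and correctly produces the factor $T\bigl(\Delta+\tfrac{X}{V}\log\tfrac{2X}{V}\bigr)$. The Kloosterman term, however, is left as ``the main obstacle'', and the sketch you give for it does not close. The assertion that the modulation by $(X_r/X_{r'})^{it}$ ``again produces a factor $\ll\min(\Delta,\abs{L_{r,r'}}^{-1})$ by stationary phase, exactly as for $\mathcal D$'' is not justified: in the Bessel transform the linear phase $tL_{r,r'}$ combines with the $t$-phase of $J_{2it}(x)/\cosh(\pi t)$, whose stationary point depends on $x$ (hence on $c$), so the saving from the $X_r$-oscillation does not factor out cleanly and sit in front of a standard Deshouillers--Iwaniec estimate. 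Their large sieve has no $X_r$-twist and no $r,r'$ oscillation to exploit on the geometric side, so ``as in Deshouillers--Iwaniec'' is not enough; handling this interaction uniformly in $x\asymp N/c$ and in $(r,r')$ is precisely the content of Jutila's paper. Until that oscillatory analysis of $\Phi_{r,r'}$ is carried out, the bound $\mathcal K\ll N\bigl(\Delta+\tfrac{X}{V}\log\tfrac{2X}{V}\bigr)(TN)^\e\norm{b}^2$ is an expectation rather than a proof.
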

\begin{proof} This is \cite[Theorem 4.1]{Jutila:2000a} specialized to $\varphi_n(x,y)$=1, $\psi_n(x,y)=\pm x\log(1+y)/(2\pi)$, $\Psi=\Psi'=T$, $\lambda=\Phi=1$, $y_r=X_r/X-1$, $\delta=V/X$.
\end{proof}

Applying Theorem  \ref{jutila-sieve} we find that the sum in \eqref{bound-this} is bounded by an absolute constant $C_\e$ times
\begin{align}
\sum_{\substack{l\leq B\\\pm}}&\log T\max_{T'\leq T}T'^{l-1/2}\left((T'+h)\left(T'+\frac X V \log(\frac{2X}{V})\right)X^{1-2l}h^{2l}R) X^{\e/4}\right)^{1/2}\\
&\ll \left(1+h\right)^{1/2}\left(T + \frac{X}{V}\right)^{1/2}X^{1/2}R^{1/2}X^{\e/4},
\end{align}
where we are using that $Th/X\ll 1$. This gives
\begin{equation} RV\ll (1+h)^{1/2}\left(T+ \frac{X}{V}\right)^{1/2}X^{1/2}R^{1/2}X^{\e/4} +R(X/T+X^{1/2})X^{\e/3}.\end{equation}
We choose  $T=X^{1+\e/2}/V$ and note that with this choice the last term satisfies
\begin{equation}
   R(X/T+X^{1/2})X^{\e/3}=RV(X^{\frac{\e}{3}-\frac{\e}{2}}+X^{1/2+\e/3}/V) =o(RV)
\end{equation} since $V\geq X^{1/2+\e}$. It follows that
\begin{equation}
    RV\ll (1+h)^{1/2}\left( \frac{X}{V}\right)^{1/2}X^{1/2}R^{1/2}X^{\e/2}.
\end{equation}
Squaring and  dividing by $RV^2$ we find that
\begin{equation}
  R\ll (1+h)\frac{X^2}{V^3}X^{\e}.
\end{equation}
If $h\leq X^{1/2}$ we have $hT/X\leq
{X^{1/2+\e/2}}/{V}\leq X^{-\e/2}$ so all estimates hold uniformly in this range; this requirement comes from $Th\leq X^{1-\delta}$ in Proposition \ref{spectral-expansion}.
  Using Lemma \ref{technical-lemma} we conclude the following result:
  \begin{thm} Let $h\geq 1$. Then
    \begin{equation}
      \left(\frac{1}{X}\int_{1}^{X}\abs{B(t,h)}^2dt\right)^{1/2}\ll_\e h^{1/2}X^{1/2+\e},
    \end{equation}
 uniformly for $h\ll X^{1/2}$.
  \end{thm}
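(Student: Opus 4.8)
The plan is to feed the well-spaced large-values estimate just established into Lemma~\ref{technical-lemma} and then sum over dyadic scales. The bound $R\ll_\e (1+h)X^2V^{-3}X^\e$ verified above is precisely condition~\eqref{second-equivalent} of Lemma~\ref{technical-lemma} with $C_h=1+h$, holding uniformly for $h\ll X^{1/2}$; together with the two hypotheses of the lemma (the pointwise bound $\abs{B_f(t,h)}\le t^{1+\e}$ from \eqref{deligne-bound} and the short-average comparison, both checked above) the implication \eqref{second-equivalent}$\Rightarrow$(1) gives the dyadic mean square bound
\begin{equation*}
\int_Y^{2Y}\abs{B_f(t,h)}^2\,dt\ll_\e (1+h)Y^{2+\e},
\end{equation*}
valid at every scale $Y$ with $h\ll Y^{1/2}$.

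First I would split $\int_1^X=\sum_j\int_{X2^{-j-1}}^{X2^{-j}}$ into dyadic blocks $[Y,2Y]$ and treat three ranges of $Y$ separately. For $Y\ge h^2$ the displayed mean square bound applies, and the resulting geometric series is dominated by its top term $Y\asymp X$, contributing $\ll_\e (1+h)X^{2+\e}$. Because $h\ll X^{1/2}$ forces $h^2\ll X$, the remaining scales $Y<h^2$ all lie inside $[1,X]$; on them the constraint $hT\le X^{1-\delta}$ behind Proposition~\ref{spectral-expansion}, equivalently $h\ll Y^{1/2}$, fails, so the mean square bound is unavailable and I would revert to pointwise estimates.

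For $h^{3/2}\le Y<h^2$ the uniform bound \eqref{2/3-bound} gives $\abs{B_f(t,h)}\ll t^{2/3+\e}$, whence $\int_Y^{2Y}\abs{B_f}^2\ll Y^{7/3+\e}$; summing, the top scale $Y\asymp h^2$ dominates and the total is $\ll h^{14/3+\e}$. For $Y<h^{3/2}$ I would use the trivial estimate $\abs{B_f(t,h)}\ll t^{1+\e}$ coming from \eqref{deligne-bound} (and $B_f(t,h)=0$ once $t<h$), so that $\int_Y^{2Y}\abs{B_f}^2\ll Y^{3+\e}$ and, with top scale $Y\asymp h^{3/2}$, the total is $\ll h^{9/2+\e}$. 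An elementary comparison of exponents shows $h^{14/3}\ll hX^{2+\e}$ and $h^{9/2}\ll hX^{2+\e}$ throughout $h\ll X^{1/2}$ (the binding constraints are $h\ll X^{6/11}$ and $h\ll X^{4/7}$, both implied by $h\ll X^{1/2}$). Summing the three contributions yields $\int_1^X\abs{B_f(t,h)}^2\,dt\ll_\e hX^{2+\e}$, and dividing by $X$ and taking square roots gives the stated bound.

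The hard part will be handling the small-scale regime $Y<h^2$, where the spectral large sieve of Theorem~\ref{jutila-sieve} is inapplicable because its hypothesis $hT\le X^{1-\delta}$ breaks down near $Y=h$. The real content is thus to confirm that the crude pointwise inputs available there --- the $2/3$ estimate \eqref{2/3-bound} in the middle range and Deligne's bound \eqref{deligne-bound} near $Y=h$ --- are still strong enough that, after integration over the short initial segment $[1,h^2]$, their contribution stays below the target $hX^{2+\e}$ uniformly over the entire admissible range $h\ll X^{1/2}$; the exponent bookkeeping above is exactly what makes this work.
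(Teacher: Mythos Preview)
Your argument is correct and follows the same route as the paper: verify condition~\eqref{second-equivalent} of Lemma~\ref{technical-lemma} via the well-spaced large-values bound $R\ll_\e (1+h)X^{2+\e}V^{-3}$, and conclude. The paper's own proof is literally the single clause ``Using Lemma~\ref{technical-lemma} we conclude the following result'', so the lemma's output $\int_X^{2X}\abs{B_f(t,h)}^2\,dt\ll_\e(1+h)X^{2+\e}$ is taken as the theorem (note that the companion Theorem~\ref{mean-square-theorem} is in fact only stated dyadically).

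Where you go beyond the paper is in passing from the dyadic bound to $\int_1^X$. You correctly observe that the uniformity clause $h\in A_Y$ (i.e.\ $h\ll Y^{1/2}$, forced by the constraint $Th\le Y^{1-\delta}$ in Proposition~\ref{spectral-expansion}) is lost at scales $Y<h^2$, and you patch this with the pointwise inputs \eqref{2/3-bound} on $[h^{3/2},h^2]$ and Deligne on $[h,h^{3/2}]$ (using $B_f(t,h)=0$ for $t<h$). The exponent checks $h^{14/3}\ll hX^2$ and $h^{9/2}\ll hX^2$ under $h\ll X^{1/2}$ are correct. This is a genuine detail that the paper leaves implicit; your treatment is a clean way to fill it in and costs nothing in the final bound.
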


  Using \eqref{B-to-A} we see that this also implies Theorem \ref{mean-square-theorem}. We note that in order to improve on the $h$ dependence with this method it is crucial to improve on the factor $(T+N)$ in Theorem \ref{jutila-sieve}.

  \section{The variance over  a Hecke basis}
  We now go back to the smooth shifted convolution sums $A_f^{W}(X,h)$, and investigate the variance over an orthonormal basis of Hecke eigenforms $H_k$. More precisely we want to understand \begin{equation}
\frac{2\pi ^2}{k-1}\sum_{f\in H_k}  \frac{A_f^{W_1}(h_1,X)\overline{A_f^{W_2}(h_2,X)}}{L(1, \sym^2 f)}.\end{equation}

 In order to better describe the dependence on $W$, and $h$ we use the Sobolev norms
 \begin{align}
 \begin{split}
 \label{sobolev-norms}\norm{W}_{l,p}^p=&\sum_{0\leq i\leq l}\norm{\frac{d^i}{dy^i}W}_p^p,\\ \norm{W}_{l,\infty}=&\sum_{0\leq i\leq l}\norm{\frac{d^i}{dy^i}W}_\infty.
 \end{split}
 \end{align}

For all compactly supported functions $W$ we assume  $\supp{W_i}\subseteq [a_W,A_W]$, and that $A_W\geq 1$, and for two shifts $h_1$, $h_2$ we denote $\norm{h}_\infty=\max(h_1,h_2)$.
  \subsection{Small range asymptotics}
  We investigate the case where the size of the range of the sum in $A_f^{W}(X,h)$ is small compared  to the size of the additional average i.e. $\dim(S_k)\sim k/12$. Specifically we investigate the range $X\leq k^{1/2-\e}$.

  The main tool in understanding this is the Petersson formula, which states that
  \begin{align} \frac{2\pi ^2}{k-1}& \sum_{f\in H_k} \frac{\lambda_f(n_1)\lambda_f(n_2)}{L(1, \sym^2(f))}\\& = \delta_{n_1,n_2}+2\pi (-1)^{k/2} \sum_{\substack{c\geq 1}} \frac{S(n_1,n_2;c)}{c}J_{k-1}\left( \frac{4\pi \sqrt{n_1n_2}}{c}\right), \label{Petersson}\end{align}
   see e.g. \cite[p. 776]{LuoSarnak:2004a}. When $X\leq k^{1/2-\e}$ we will see that we can make good use  of the following bound on the $J$-Bessel function
     \begin{equation} \label{good-bound-Bessel} J_{k-1}(x)\ll \left( \frac{ex}{2k}\right)^{k-1}, \end{equation}
  see e.g. \cite[p. 233]{LiuMasri:2014}. In fact the decay \eqref{good-bound-Bessel} will imply that the contribution from the sum over $c$ becomes essentially neglegible for this  problem. This means that we are reduced to studying the diagonal terms. In our treatment we take extra care to get an explicit error-term.

  In order  to use the Petersson formula we note that, since the normalized Hecke eigenvalues satisfy the Hecke relations
 \begin{equation}\label{hecke-relations}
     \lambda_f(n)\lambda_f(m)=\sum_{d\vert(m,n)}\lambda_{f}\left(\frac{mn}{d^2}\right),
 \end{equation}
 see \cite[(6.38)]{Iwaniec:1997a},
we may rewrite
  \begin{align}
      A_f^{W}(X,h)&= \sum_{n\in \N}\sum_{d\vert (n,n+h)}\lambda(n(n+h)/d^2) W((n+h/2)/X)\\
\label{after-Hecke-relations}      &=\sum_{d\mid h} \sum_{r\in \N}\lambda_f(r(r+d))W\left(\frac{\frac{h}{d}(r+d/2)}{X}\right).
  \end{align}

  To state our theorem in this case we recall from the introduction that for functions $W_1, W_2:(0,\infty)\rightarrow \R$ and $h_1,h_2\in \N$ we define
 \begin{align}\label{BLF} B_{h_1,h_2}(W_1, W_2)=  \tau_1((h_1,h_2))\int_{0}^\infty W_1(h_1 y) \overline{W_2(h_2y)}dy. \end{align}

\begin{thm} \label{scs}
Let $W_1, W_2:(0,\infty)\rightarrow \R$ be smooth functions with compact support. Then for $A_{W_1}A_{W_2}X\ll k^{1/2-\e}$, $h_i<2a_{W_i}X$,
  \begin{align}
  \label{shiftedconv} \frac{2\pi ^2}{k-1}\sum_{f\in H_k}  \frac{A_f^{W_1}(h_1,X)\overline{A_f^{W_2}(h_2,X)}}{L(1, \sym^2 f)} =  B_{h_1,h_2}(W_1,W_2) X+ O_{W_i,h_i}(1).
  \end{align}
The implied constant is $\ll_\e \norm{h}_\infty^{\e}(\norm{W_1(h_1\cdot)W_2(h_2\cdot)}_{2,1} +\norm{W_1}_\infty\norm{W_2}_\infty)$ for any $\e>0$.
  \end{thm}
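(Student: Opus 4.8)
The plan is to open up both smooth sums via the Hecke-relation identity \eqref{after-Hecke-relations}, feed the product into the Petersson formula \eqref{Petersson}, and then split the resulting expression into a diagonal and a Kloosterman part. Writing $n_i=r_i(r_i+d_i)$, the product $A_f^{W_1}(h_1,X)\overline{A_f^{W_2}(h_2,X)}$ becomes a fourfold sum over $d_1\mid h_1$, $d_2\mid h_2$, $r_1,r_2\in\N$ of $\lambda_f(n_1)\lambda_f(n_2)$ weighted by the two $W_i$-factors. Since $W_i$ is supported in $[a_{W_i},A_{W_i}]$, the hypothesis $h_i<2a_{W_i}X$ guarantees $r_i\geq 1$ throughout, so that \eqref{after-Hecke-relations} is valid with no boundary contribution at $r=0$; it also forces $r_i\ll A_{W_i}Xd_i/h_i\leq A_{W_i}X$, whence $n_i\ll (A_{W_i}X)^2$ and $\sqrt{n_1n_2}\ll A_{W_1}A_{W_2}X^2\ll k^{1-2\e}$ by the assumption $A_{W_1}A_{W_2}X\ll k^{1/2-\e}$.

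First I would dispose of the Kloosterman contribution. In \eqref{Petersson} the Bessel argument is $4\pi\sqrt{n_1n_2}/c$, so \eqref{good-bound-Bessel} together with $\sqrt{n_1n_2}\ll k^{1-2\e}$ gives $J_{k-1}(4\pi\sqrt{n_1n_2}/c)\ll (Ck^{-2\e}/c)^{k-1}$ for an absolute $C$. Bounding trivially $\abs{S(n_1,n_2;c)}\leq c$, the sum over $c\geq 1$ converges and is $\ll (Ck^{-2\e})^{k-1}$. As there are only $\ll d(h_1)d(h_2)X^2$ quadruples $(d_1,d_2,r_1,r_2)$, each weighted by $\ll\norm{W_1}_\infty\norm{W_2}_\infty$, and $X\ll k^{1/2}$, the entire Kloosterman part is super-polynomially small in $k$, hence $O(1)$.

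This leaves the diagonal $\delta_{n_1,n_2}$, i.e.\ the solutions of $r_1(r_1+d_1)=r_2(r_2+d_2)$, which is the crux of the argument. Setting $a=2r_1+d_1$, $b=2r_2+d_2$ the equation becomes $(a-b)(a+b)=d_1^2-d_2^2$. When $d_1\neq d_2$ the right-hand side is a fixed nonzero integer of size $\ll\norm{h}_\infty^2$, so $a-b$ runs over its divisors and each choice determines $a,b$, hence $(r_1,r_2)$; these ``fake diagonal'' solutions number $\ll\norm{h}_\infty^\e$ per pair $(d_1,d_2)$ and contribute $\ll\norm{h}_\infty^\e\norm{W_1}_\infty\norm{W_2}_\infty=O(1)$. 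When $d_1=d_2=:d$, strict monotonicity of $r\mapsto r(r+d)$ forces $r_1=r_2$, and $d\mid h_1$, $d\mid h_2$ means $d\mid(h_1,h_2)$; these are precisely the terms producing the main term.

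Finally I would evaluate the genuine diagonal. For fixed $d\mid(h_1,h_2)$ the substitution $y=u/(dX)$ turns the arguments $(h_i/d)u/X$ into $h_iy$, so the contribution is $\sum_{r\geq1}\Phi\big((r+d/2)/(dX)\big)$ with $\Phi:=W_1(h_1\cdot)\overline{W_2(h_2\cdot)}$; the hypothesis $h_i<2a_{W_i}X$ places the support strictly among $r\geq1$, so Poisson summation applies cleanly and replaces the sum by $dX\int_0^\infty\Phi(y)\,dy$ plus a tail whose $m$-th term is $\leq dX\abs{\hat\Phi(dXm)}$. Since $\Phi\in C_c^\infty$ two integrations by parts give $\abs{\hat\Phi(\xi)}\ll\norm{\Phi}_{2,1}\abs{\xi}^{-2}$, so the tail is $\ll\norm{\Phi}_{2,1}/(dX)=O(1)$. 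Summing the main pieces over $d\mid(h_1,h_2)$ yields $\sum_{d\mid(h_1,h_2)}dX\int_0^\infty W_1(h_1y)\overline{W_2(h_2y)}\,dy=\tau_1((h_1,h_2))X\int_0^\infty W_1(h_1y)\overline{W_2(h_2y)}\,dy=B_{h_1,h_2}(W_1,W_2)X$, while the tails sum to $\ll\norm{h}_\infty^\e\norm{\Phi}_{2,1}$. Collecting the three error sources gives the claimed bound $\ll_\e\norm{h}_\infty^\e(\norm{W_1(h_1\cdot)W_2(h_2\cdot)}_{2,1}+\norm{W_1}_\infty\norm{W_2}_\infty)$. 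The hard part is exactly this diagonal analysis: correctly isolating the true diagonal to generate the divisor-sum constant $\tau_1((h_1,h_2))$, and controlling the fake-diagonal solutions of $r_1(r_1+d_1)=r_2(r_2+d_2)$ by the divisor bound so that they remain $O(1)$.
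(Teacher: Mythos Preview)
Your proposal is correct and follows essentially the same approach as the paper: apply the Hecke relations \eqref{after-Hecke-relations} and the Petersson formula, kill the Kloosterman part via the Bessel bound \eqref{good-bound-Bessel} and the hypothesis $A_{W_1}A_{W_2}X\ll k^{1/2-\e}$, handle the ``fake diagonal'' $d_1\neq d_2$ by the factorization $(2r_1+d_1)^2-(2r_2+d_2)^2=d_1^2-d_2^2$ and the divisor bound, and extract the main term from the true diagonal $d_1=d_2$, $r_1=r_2$ by Poisson summation with two integrations by parts. One small expository slip: the condition $h_i<2a_{W_i}X$ is not needed for \eqref{after-Hecke-relations} itself (which already sums over $r\in\N$), but only to extend the $r$-sum to $\Z$ before applying Poisson, exactly as you use it in your final paragraph.
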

  \begin{proof}
Using \eqref{after-Hecke-relations} and the Petersson formula \eqref{Petersson}  we find
  \begin{align}
\label{some-expression}  &\frac{2\pi ^2}{k-1}\sum_{f\in H_k} \frac{A_f^{W_1}(h_1,X)\overline{A_f^{W_2}(h_2,X)}}{L(1, \sym^2 f)}
   \\
  &=\sum_{\substack{d_1 \mid h_1 \\ d_2 \mid h_2}} \sum_{r_1,r_2\in\N} \delta_{r_1(r_1+d_1)=r_2(r_2+d_2)} W_1\left( \frac{h_1(r_1+d_1/2)}{d_1 X} \right)\overline{W_2\left( \frac{h_2(r_2+d_2/2)}{d_2X} \right)}\\
  +&2\pi (-1)^{k/2} \sum_{\substack{d_1 \mid  h_1 \\ d_2 \mid h_2}} \sum_{r_1,r_2\in \N} W_1\left( \frac{h_1(r_1+d_1/2)}{d_1 X} \right)\overline{W_2\left( \frac{h_2(r_2+d_2/2)}{d_2X} \right)}\\
  &\times \sum_{\substack{c\geq 1}} \frac{S(r_1(r_1+d_1),r_2(r_2+d_2);c)}{c}J_{k-1}\left( \frac{4\pi \sqrt{r_1(r_1+d_1)r_2(r_2+d_2)}}{c}\right). \end{align}
We refer to the line with the Kronecker delta as the diagonal term, and the rest as the off-diagonal term.

To handle the diagonal term, we observe that for fixed positive  $d_1\neq d_2$ the equation \begin{equation}\label{finitely-many} r_1(r_1+d_1)=r_2(r_2+d_2)\end{equation}
has only finitely many positive integer solution $(r_1,r_2)\in \N^2$. To see this we rewrite the equation as $(2r_1+d_1)^2-(2r_2+d_2)^2=d_1^2-d_2^2$.
Factoring the left-hand side as $(2r_1+d_1+2r_2+d_2)(2r_1+d_1-2r_2-d_2)$ we see that any solution gives a factorization of  $d_1^2-d_2^2$, and that any factorization of $d_1^2-d_2^2$ comes from at most one solution.
This shows that there are at most $d(d_1^2-d_2^2)$ solutions to  \eqref{finitely-many}  with $d_1\neq d_2$, where $d(n)$ denotes the number of divisors of $n$; indeed we see that the total contribution from these terms is $O(\norm{h}_\infty^{ \e}\norm{W_1}_\infty\norm{W_2}_\infty)$.

For the remaining terms, i.e. $d_1=d_2=d$, $r_1=r_2=r$  we apply first Poisson summation in the $r$-variable  and use the fact that the Fourier transform of the function $y\mapsto W_1\left( \frac{h_1(y+d/2)}{dX} \right)\overline{W_2\left( \frac{h_2(y+
d/2)}{dX}\right)}$ at $r$ is bounded by an absolute constant times
\begin{equation} \label{coffeetime}
      \abs{r}^{-n}(dX)^{-n+1}\norm{W_1(h_1\cdot)W_2(h_2\cdot)}_{n,1}.\end{equation}
This follows from repeated integration by parts.

By the assumption on $h_i$ we now see that
  \begin{align}
  & \sum_{\substack{d_1 \mid h_1 \\ d_2 \mid h_2\\ d_1=d_2}} \sum_{r\in\N} W_1\left( \frac{h_1(r+d_1/2)}{d_1X} \right)\overline{W_2\left( \frac{h_2(r+
  d_2/2)}{d_2X}\right)}\\ \allowdisplaybreaks
 \label{extendtoZ} & =\sum_{d\vert(h_1,h_2)} \sum_{r\in\Z} W_1\left( \frac{h_1(r+d/2)}{dX} \right)\overline{W_2\left( \frac{h_2(r+
  d/2)}{dX}\right)}\\
  &= \sum_{d\mid (h_1,h_2)} \sum_{r\in \Z}\int_{-\infty}^\infty W_1\left(\frac{h_1 (y+d/2)}{dX}\right)\overline{W_2\left(\frac{h_2 (y+d/2))}{dX}\right)}e(-ry)dy\\
  &= \tau_1((h_1,h_2))\int_{-\infty}^\infty W_1(h_1 y) \overline{W_2(h_2y)}dyX +O(\norm{h}_\infty^{\e}\norm{W_1(h_1\cdot)W_2(h_2\cdot)}_{2,1} X^{-1} ),
  \end{align}
where, in the first equality we have extended  the $r$-sum to all of $r$ trivially as    all added terms are zero, since $h_i<2a_{W_i}X$. In the second equality we  use Poisson summation and finally evaluate the $r=0$ term and bound the rest using  \eqref{coffeetime} with $n=2$.

Using  the bound \eqref{good-bound-Bessel} on the $J$-Bessel function, we may bound the off-diagonal term in the expression for \eqref{some-expression}
  by an absolute constant times $\norm{W_1}_\infty \norm{W_2}_\infty$ times
  \begin{align*}
   \sum_{d_i\vert h_i}\left(\sum_{r_i+\frac{d_i}{2}\leq A_{W_i}d_iX/h_i} \left(\frac{e 4\pi \sqrt{r_1(r_1+d_1) r_2(r_2+d_2)}}{2k}\right)^{k-1}\right) \left(\sum_{\substack{c\geq 1}} c^{-(k-1)}   \right),
  \end{align*}
using the trivial bound for Kloosterman sums and the sup norm for the test functions. Note that since $r_i(r_i+d_i)=(r_i+d_i/2)^2-d_i^2/4$ the square root is $\ll A_{W_1}A_{W_2}X^2$. Since we have  assumed that $A_{W_i}X\leq A_{W_1}A_{W_2} X\ll k^{1/2-\e}$, we find
\begin{equation}   \left(\frac{e4\pi \sqrt{r_1(r_1+d_1) r_2(r_2(r_2+d_2)})}{2k}\right)^{k-1} \ll k^{-\e k},\end{equation}
and, therefore, the off-diagonal term decay exponentially in $k$ and only polynomially in $X$. This finishes the proof, as it shows that the contribution of the off-diagonal term is $O(\norm{h}_\infty^\e\norm{W_1}_\infty\norm{W_2}_\infty k^{-A})$ for any $A>0$.
  \end{proof}

  \begin{rem} Note that in Theorem \ref{scs} the leading term is coming from the diagonal term and the off-diagonal term is essentially negligible because of the exponential decay of the $J$-Bessel function when $X\ll k^{1/2-\e}$. If $X\gg k^{1/2-\e}$  such decay is not available, and we do not know how to estimate the sum. However, if we make an additional average over $k$, then the off-diagonal may be analyzed, and it is again possible to prove an asymptotic formula. See \cite[Thm 2.1]{NordentoftPetridisRisager:2020a} for details.

  \end{rem}

  \begin{rem}\label{remove-condition}
  We can remove the condition $h_i<2a_{W_i}X$ in Theorem \ref{scs} as follows. The only place this restriction is used is to extend the summation in \eqref{extendtoZ} to $\Z$ instead of $\N$ since in this case there are no $r\leq 0$ with $a_{W_i}({d_i}/{h_i})X\leq r_i+ {d_i}/{2}$. Without $h_i<2a_{W_i}X$ there can be at most $d_i$ such non-positive $r$ as $a_{W_i}>0$ so we may omit the restriction on $h_i$ at the expense of an extra error term of size $O(\norm{h}_\infty
 ^{1+\e}\norm{W_1}_\infty \norm{W_2}_\infty$).
  \end{rem}
  \begin{rem}
  We can also prove Theorem \ref{scs} when $W_i(y)=1_{y\leq 1}$ are sharp cut-off functions, and we get an error term  $O(\norm{h}_\infty^{1+\e})$. In this case the Poisson summation argument is replaced by an elementary count. An interpretation of Theorem \ref{scs} with sharp cut-offs is that we obtain the conjectured square-root cancellation in the $X$-aspect for the shifted convolution problem  on average.
  \end{rem}

\bibliographystyle{amsplain}
\providecommand{\bysame}{\leavevmode\hbox to3em{\hrulefill}\thinspace}
\providecommand{\MR}{\relax\ifhmode\unskip\space\fi MR }
\providecommand{\MRhref}[2]{%
  \href{http://www.ams.org/mathscinet-getitem?mr=#1}{#2}
}
\providecommand{\href}[2]{#2}

\section*{Data Availability}
Data sharing not applicable to this article as no datasets were generated or analysed during the current study.

\end{document}